\renewcommand{\epsilon}{\varepsilon}
\renewcommand{\Re}{\text{\rm Re\,}}
\renewcommand{\Im}{\text{\rm Im\,}}
\renewcommand{\le}{\leqslant}
\renewcommand{\ge}{\geqslant}
\newtheoremstyle{citedth}%
  {5pt}
  {5pt}
  {\itshape}
  {}
  {\bfseries}
  {.}
  {.3em}
  {\thmname{#1} \thmnumber{#2} \thmnote{\normalfont#3}}
\theoremstyle{citedth}
\newtheorem{theoremA}{Theorem}
\theoremstyle{theorem}
\newtheorem{theorem}{Theorem}
\theoremstyle{definition}
\newtheorem{example}{Example}
\newtheorem{remark}{Remark}
\numberwithin{equation}{section}
\numberwithin{theorem}{section}
\numberwithin{example}{section}
\numberwithin{remark}{section}
\begin{document}
\author{Petr Chunaev}
\address{Universitat Aut\`{o}noma de Barcelona, Bellaterra (Barcelona) Spain and Vladimir State University, Vladimir, Russia} \email{chunayev@mail.ru, chunaev@mat.uab.cat}
\author{Vladimir Danchenko}
\address{Vladimir State University, Vladimir, Russia} \email{vdanch2012@yandex.ru}
\keywords{Rational functions, Jackson-Nikolskii inequalities, quadrature formulas}
\subjclass[2010]{41A17; 41A44; 30E20}

\title[Quadrature formulas and inequalities for rational functions]{Quadrature formulas with variable nodes and Jackson-Nikolskii inequalities for rational functions}

\begin{abstract}
We obtain new parametric quadrature formulas with variable nodes for
integrals of complex rational functions over circles, segments of
the real axis and the real axis itself. Basing on these formulas we
derive $(q,p)$-inequalities of Jackson-Nikolskii type
 for various classes of rational functions, complex polynomials and their logarithmic derivatives (simple partial fractions). It is shown that our $(\infty,2)$- and $(\infty,4)$-inequalities are sharp in a number of main theorems. Our inequalities extend and refine several results obtained earlier by other authors.
\end{abstract}

\maketitle


\section{Introduction}

Sharp quadrature formulas for algebraic and trigonometric rational functions over circles and the real axis are well-known. Such formulas of interpolation type with fixed nodes (mostly at the zeros of Chebyshev-Markov fractions of the first and second kind) are constructed, for instance, in
\cite{Dirvuk,ROVBA2,ROVBA-DIRVUK,RUSAK1,OSIPENKO,MIN}.
Quadrature formulas contributed significantly to rational interpolation and approximation theory, in particular, to the study of approximating properties of special classes of rational fractions such as the (sine) cosine Chebyshev-Markov fractions.
 Moreover, they were actively used in various extremal problems, for example, for obtaining Markov-Bernstein, Szeg\H{o} and Nikolskii type inequalities for rational functions on circles and the real axis
  (see the history of this question and the references in \cite{[D3]}, \cite[Section 11]{Erdelyi} and \cite[Sections 1-2]{Baranov}).

Sharp quadrature formulas of another type were obtained recently in \cite{DanSem2016} (and partly earlier in~\cite{D2010}). The quadrature nodes in these papers are variable, depending on a certain parameter.
Under the corresponding change of this parameter, every node continuously varies over the whole domain of integration. This, in particular, allows us to choose one of the nodes at each prescribed point of the domain of integration.
This property is used in \cite{DanSem2016} in order to obtain the inequalities between various metrics (inequalities of Jackson-Nikolskii type) for rational functions and polynomials. We now cite one of such results that will be used below.

Let $r>0$ and $\gamma_r:=\{z:\,|z|=r\}$. Consider a finite set of pairwise distinct points on $\overline {\mathbb C}$ of the form
${\mathcal Z}_{\eta}\cup {\mathcal Z}_{\eta}^*$, ${\eta}\in
\mathbb N$, where
\begin{equation}
\label{Sets} {\mathcal
Z}_{\eta}:=\{{\xi}_1,\ldots,{\xi}_{{\eta}}\},\quad {\mathcal
Z}_{\eta}^*=\{{\xi}_1^{*},\ldots,{\xi}_{{\eta}}^{*}\}=
\{r^2/\overline{{\xi}_1},\ldots,r^2/\overline{{\xi}_{{\eta}}}\},\quad
{\xi}_1=0,\quad {\xi}_1^{*}=\infty,
\end{equation}
and the set ${\mathcal Z}_{\eta}$ lies in the disc $|z|<r$
(the set ${\mathcal Z}_{\eta}^*$ is symmetric to ${\mathcal
Z}_{\eta}$ with respect to the circle $\gamma_r$). Given $z\in \mathbb
C$, put
\begin{equation*}
\label{Def1} \hat{B}(z)=\frac{z}{r}\cdot\prod_{k=2}^{{\eta}}r
\frac{z-{\xi}_k}{r^2-z\overline{{\xi}_k}},\qquad
\hat{\mu}(z)=z\frac{\hat{B}'(z)}{\hat{B}(z)}.
\end{equation*}
It is easily seen that
\begin{equation*}
\label{Def2}
\hat{\mu}(\zeta)=\sum_{k=1}^{\eta}\frac{r^2-|{\xi}_k|^2}{|\zeta-{\xi}_k|^2}=1+\sum_{k=2}^{\eta}\frac{r^2-|{\xi}_k|^2}{|\zeta-{\xi}_k|^2},\qquad
\zeta\in\gamma_r.
\end{equation*}
Let $\zeta_k=\zeta_k(s,\varphi)$ be the roots of the equation
\begin{equation}
\label{NASECHKA} \hat{B}^s(\zeta)=e^{i\varphi}
 \end{equation}
with respect to $\zeta$ for some fixed $s\in \mathbb{N}$ and $\varphi\in \mathbb{R}$.
We call these roots \textit{notches}; they play an important role
in constructing quadrature formulas in \cite{DanSem2016}. It is not difficult to check that
for any  $s$ and $\varphi$ there are $s{\eta}$ different notches belonging to  $\gamma_r$. Indeed, the Blaschke
product $\hat{B}$ contains multipliers of the form
$$
v_k(\zeta):=r\frac{\zeta-{\xi}_k}{r^2-\zeta\overline{{\xi}_k}}=
\frac{r}{\zeta}\frac{\zeta-{\xi}_k}{r^2/\zeta-\overline{{\xi}_k}}=
\frac{r}{\zeta}\frac{\zeta-{\xi}_k}{\overline{\zeta}-\overline{{\xi}_k}}=
\frac{r}{\zeta}\frac{\zeta-{\xi}_k}{\overline{\zeta-{\xi}_k}}=e^{-i
\varphi(\zeta)}e^{2it(\zeta)}=e^{i(2t(\zeta)-\varphi(\zeta))},
$$
where $\zeta-{\xi}_k=r_1(\zeta)e^{it(\zeta)}$, $\zeta=re^{i
\varphi(\zeta)}$. Therefore, if the point $\zeta$ runs around the circle $\gamma_r$, then
 $\arg v_k(\zeta)=2t(\zeta)-\varphi(\zeta)$ has the increment $2\pi$. Note that the function $\arg v_k(\zeta)$
is monotone as a function of $\varphi$. If  the function $\arg v_k(\zeta)$ changed the direction at some point
$\zeta_0$, then the point
$v_k(\zeta)$ would change the direction (on the unit circle) and
the derivative $v_k'(\zeta_0)$ at $\zeta_0$  would vanish.
But the derivative of the linear fractional transformation does not vanish.
Thus, the continuous branch of the function $\arg{B}^s(\zeta)$ increases from
 $0$ up to $2\pi s{\eta}$ monotonically when $\zeta$ runs around the circle $\gamma_r$ in the positive direction. Consequently, $B^s(\zeta)$ runs around the unit circle continuously $s{\eta}$ times and hence the equation
(\ref{NASECHKA}) has $s{\eta}$ pairwise distinct roots.

The following statement holds.
\begin{theoremA}[\cite{DanSem2016}]
\label{DanSem_Th1}
Let $s\in \mathbb{N}$ and let $R$
be a rational function with poles on ${\mathcal Z}_{\eta}\cup {\mathcal Z}_{\eta}^*$; assume also that the poles different from ${\xi}_1=0$ and ${\xi}_1^{*}=\infty$ are of multiplicity at most  $s$ while the points ${\xi}_1$ and ${\xi}_1^{*}$ can
be poles of multiplicity at most $s-1$. Then for all $\varphi\in \mathbb{R}$
and the notches $\zeta_k=\zeta_k(s,\varphi)$, satisfying the equation
 $\hat{B}^{s}(\zeta)=e^{i\varphi}$, it holds that
\begin{equation}
\label{ThA-1} \int_{|\zeta|=r}R(\zeta)|d\zeta|=\frac{2\pi r}{s}
\sum_{k=1}^{s{\eta}}\frac{R(\zeta_k)}{\hat{\mu}(\zeta_k)}.
\end{equation}

Moreover, for any $m\in \mathbb{N}$ and the notches $\zeta_k=\zeta_k(2ms,\varphi)$, satisfying the equation
$\hat{B}^{2ms}(\zeta)=e^{i\varphi}$, it holds that
\begin{equation}
\label{ThA-2}
\|R\|_{L^{2m}(\gamma_r)}^{2m}=\int_{\gamma_r}|R(\zeta)|^{2m}|d\zeta|=\frac{\pi
r}{ms}\sum_{k=1}^{2ms{\eta}}\frac{|R(\zeta_k)|^{2m}}{\hat{\mu}(\zeta_k)},
\end{equation}
where it suffices to require that the sum of multiplicities of the poles
 ${\xi}_1=0$ and ${\xi}_1^{*}=\infty$ is at most $2ms-1$.
\end{theoremA}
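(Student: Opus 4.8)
The plan is to reduce the identity~\eqref{ThA-1} to the exactness of the ``rectangle rule'' for a suitable periodic real-analytic function, to prove that exactness by a residue computation, and then to obtain~\eqref{ThA-2} by applying~\eqref{ThA-1} to $|R|^{2m}$. I begin with a change of variable on $\gamma_r$. Writing $\zeta=re^{i\theta}$ and letting $u=u(\theta)$ denote the continuous branch of $\arg\hat B^{s}(re^{i\theta})$, which (as established above) increases strictly from $u(0)$ to $u(0)+2\pi s\eta$, we have $\hat B^{s}(re^{i\theta})=e^{iu(\theta)}$ on $\gamma_r$, and differentiating $\log\hat B^{s}$ together with the identity $\zeta\hat B'(\zeta)/\hat B(\zeta)=\hat\mu(\zeta)$ gives $du/d\theta=\zeta(\hat B^{s})'(\zeta)/\hat B^{s}(\zeta)=s\,\hat\mu(\zeta)>0$. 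Thus $\theta\mapsto u$ is a smooth increasing bijection; putting $\Phi(u):=R(\zeta(u))/\hat\mu(\zeta(u))$, which is $2\pi s\eta$-periodic and real-analytic since $R$ has no pole on $\gamma_r$ and $\hat\mu>0$ there, the change of variable yields
\begin{equation*}
\int_{\gamma_r}R(\zeta)\,|d\zeta|=r\int_0^{2\pi}R(re^{i\theta})\,d\theta=\frac{r}{s}\int_{u(0)}^{u(0)+2\pi s\eta}\Phi(u)\,du=2\pi r\eta\,\hat\Phi_0 ,
\end{equation*}
where $\hat\Phi_0$ denotes the mean of $\Phi$. Moreover the notches $\zeta_k(s,\varphi)$ correspond, in the variable $u$, to the $s\eta$ solutions of $u\equiv\varphi\pmod{2\pi}$ lying in one period, i.e.\ to $s\eta$ points spaced by $2\pi$, so the right-hand side of~\eqref{ThA-1} equals $\tfrac{2\pi r}{s}\sum_{j}\Phi(\varphi+2\pi j)$.

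Next, expanding $\Phi(u)=\sum_{n\in\mathbb Z}\hat\Phi_n e^{inu/(s\eta)}$ (the series converges absolutely by real-analyticity) and summing the geometric progression over the $s\eta$ nodes gives the aliasing identity $\sum_{j=0}^{s\eta-1}\Phi(\varphi+2\pi j)=s\eta\sum_{l\in\mathbb Z}\hat\Phi_{s\eta l}\,e^{il\varphi}$. Hence the right-hand side of~\eqref{ThA-1} equals $2\pi r\eta\sum_l\hat\Phi_{s\eta l}e^{il\varphi}$, and comparison with the display above shows that~\eqref{ThA-1} holds for \emph{every} $\varphi$ as soon as $\hat\Phi_{s\eta l}=0$ for all $l\ne0$. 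Undoing the change of variable ($e^{-ilu}=\hat B(\zeta)^{-sl}$, $du=s\hat\mu\,d\theta$, $d\theta=d\zeta/(i\zeta)$) rewrites this condition as a residue statement:
\begin{equation*}
\hat\Phi_{s\eta l}=\frac{1}{2\pi i\eta}\int_{\gamma_r}\frac{R(\zeta)\,\hat B(\zeta)^{-sl}}{\zeta}\,d\zeta=\frac{1}{\eta}\sum_{|a|<r}\operatorname*{Res}_{\zeta=a}\frac{R(\zeta)\,\hat B(\zeta)^{-sl}}{\zeta} .
\end{equation*}

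The crucial step, which I expect to be the main obstacle, is to show that this sum vanishes for $l\ne0$; this is precisely where the multiplicity hypotheses on $R$ are used, and they turn out to be sharp. Recall that $\hat B$ has simple zeros at $0,\xi_2,\dots,\xi_\eta$ and simple poles at $\xi_2^{*},\dots,\xi_\eta^{*}$ and at $\infty$. For $l\ge1$ the only finite poles of $F_l(\zeta):=R(\zeta)\hat B(\zeta)^{-sl}/\zeta$ lie in $\{0,\xi_2,\dots,\xi_\eta\}\subset\{|\zeta|<r\}$: at each $\xi_j^{*}$ a pole of $R$ of order $\le s$ meets a zero of $\hat B^{-sl}$ of order $sl\ge s$, so $F_l$ is regular there; and $F_l(\zeta)=O(\zeta^{-2})$ as $\zeta\to\infty$ because $R$ has a pole of order $\le s-1$ at $\infty$, so $F_l$ has no residue at $\infty$ either. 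By the residue theorem on $\overline{\mathbb C}$ the residues of $F_l$ inside $\gamma_r$ therefore sum to $0$. For $l\le-1$ one verifies instead that $F_l$ has \emph{no} pole at all in $|\zeta|<r$: at $0$ a pole of $R/\zeta$ of order $\le(s-1)+1=s$ is cancelled by a zero of $\hat B^{s|l|}$ of order $s|l|\ge s$, and at each $\xi_j$ a pole of $R$ of order $\le s$ is cancelled likewise. In both cases $\hat\Phi_{s\eta l}=0$, which proves~\eqref{ThA-1}.

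Finally, for~\eqref{ThA-2} I would introduce $R^{*}(\zeta):=\overline{R(r^{2}/\overline\zeta)}$, a rational function whose pole at $b^{*}=r^{2}/\overline b$ has the same multiplicity as the pole of $R$ at $b$ (with $0^{*}=\infty$); in particular $R^{*}$ again has all its poles on ${\mathcal Z}_\eta\cup{\mathcal Z}_\eta^{*}$, and $R^{*}(\zeta)=\overline{R(\zeta)}$ on $\gamma_r$. Thus $\widetilde R:=R^{m}(R^{*})^{m}$ is rational, satisfies $\widetilde R(\zeta)=|R(\zeta)|^{2m}$ on $\gamma_r$, and the multiplicity conditions on $R$ are exactly those ensuring that $\widetilde R$ meets the hypotheses of the first part of the theorem with $s$ replaced by $2ms$ — in particular the multiplicity of $\widetilde R$ at $0$, and likewise at $\infty$, equals $m$ times the sum of the multiplicities of the poles of $R$ at $0$ and $\infty$. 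Applying~\eqref{ThA-1} to $\widetilde R$ with the notches $\zeta_k(2ms,\varphi)$ and the constant $\tfrac{2\pi r}{2ms}=\tfrac{\pi r}{ms}$, and using $\widetilde R(\zeta_k)=|R(\zeta_k)|^{2m}$, gives~\eqref{ThA-2}.
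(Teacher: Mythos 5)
You should first note that the paper itself contains no proof of Theorem~A: it is quoted from \cite{DanSem2016} (only the count of the notches is justified in the introduction), and the authors later use only the case $s=1$ with simple poles away from $0$ and $\infty$. So your proposal can only be judged on its own merits. Your argument for \eqref{ThA-1} is complete and correct: the change of variable $u=\arg\hat B^{s}(\zeta)$ with $du=s\hat\mu\,d\theta$, the aliasing identity for the $s\eta$ equally spaced nodes in the $u$-variable, and the reduction to the vanishing of $\int_{\gamma_r}R(\zeta)\hat B(\zeta)^{-sl}\zeta^{-1}\,d\zeta$ for $l\ne0$, verified by residues using the multiplicity hypotheses at $\xi_j$, $\xi_j^{*}$, $0$ and $\infty$, is a clean and valid proof of the first formula.

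The gap is in your last paragraph, the deduction of \eqref{ThA-2}. Writing $p_0,p_\infty$ for the multiplicities of the poles of $R$ at $0$ and $\infty$, the function $\widetilde R=R^{m}(R^{*})^{m}$ has a pole of order $m(p_0+p_\infty)$ at $0$ and at $\infty$, and the first part with $s$ replaced by $2ms$ requires this order to be at most $2ms-1$. The quoted condition $p_0+p_\infty\le 2ms-1$ only yields $m(p_0+p_\infty)\le m(2ms-1)$, which exceeds $2ms-1$ as soon as $m\ge2$, so your claim that the hypotheses transfer ``exactly'' is false; your argument establishes \eqref{ThA-2} precisely when $m(p_0+p_\infty)\le 2ms-1$ (equivalently $p_0+p_\infty\le 2s-1$), which covers the main hypothesis $p_0,p_\infty\le s-1$ but not the stated relaxation. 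This is not merely a bookkeeping slip you could patch: under the literal reading of the relaxed condition the identity \eqref{ThA-2} is actually false. Take $\eta=1$ (so $\hat B(z)=z/r$, $\hat\mu\equiv1$), $s=1$, $m=2$, $r=1$, $\varphi=0$ and $R(z)=z^{3}+z$, for which $p_0+p_\infty=3=2ms-1$; the notches are the fourth roots of unity, the right-hand side of \eqref{ThA-2} equals $\tfrac{\pi}{2}\bigl(|R(1)|^{4}+|R(-1)|^{4}\bigr)=16\pi$, while $\int_{0}^{2\pi}|R(e^{i\theta})|^{4}\,d\theta=\int_{0}^{2\pi}16\cos^{4}\theta\,d\theta=12\pi$. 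Hence the quoted clause must be understood as a bound on the poles of $R^{m}$ (that is, $m(p_0+p_\infty)\le 2ms-1$), or it is a misprint carried over from \cite{DanSem2016}; in either case the correct move at this point was to carry out the multiplicity count and flag the discrepancy, rather than assert that the conditions match. With the condition read as $m(p_0+p_\infty)\le 2ms-1$, your reduction to the first part is exactly right.
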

A certain shortcoming of Theorem~\ref{DanSem_Th1} is that it takes into account not the multiplicity of each individual pole but the prescribed maximal multiplicity of the poles placed in $2\eta$ ``cells'' of the set ${\mathcal Z}_{\eta}\cup
{\mathcal Z}_{\eta}^*$. This fact significantly restricts the possibility to use Theorem~\ref{DanSem_Th1} in applications. We will prove a modified version of Theorem~\ref{DanSem_Th1}
--- Theorem~\ref{Th-1+}. This theorem already takes into account not the number of
``cells'' and their maximal multiplicity but the multiplicity of each pole separately. Because of that $s{\eta}$, the number of summands in the sum~(\ref{ThA-1}), can be exchanged for $1+\deg R\le s{\eta}$.

From Theorem~\ref{Th-1+} we will obtain the refinement of (\ref{ThA-2}) ---
the quadrature formula (\ref{Th1_Main_L_m}). Due to the variable nodes, this formula will imply $(q,p)$-inequalities of Jackson-Nikolskii type for rational functions on the circle $\gamma_r$. These inequalities refine the results obtained earlier in   \cite{Baranov} and \cite{DanSem2016}. What is more, our $(\infty,2)$-inequalities are sharp. As a particular case, we get sharp $(\infty,2)$-inequalities for complex polynomials, which were established earlier by various authors using other methods. We cite these results in the corresponding section.

We will also obtain analogues of modified Theorem~\ref{DanSem_Th1} and its applications
in the cases of the real axis, the real semiaxis and the segment $[-1,1]$.
In particular, we will obtain sharp $(\infty,2)$- and $(\infty,4)$-inequalities for logarithmic derivatives of complex polynomials on the real axis which refine several previous results.

\section{Modified quadrature formulas}

\subsection{The case of the circle $\gamma_r$}

\label{Section_Circle}
Basing on Theorem~\ref{DanSem_Th1}, we now prove the following main result about the quadrature formula in the case of the circle $\gamma_r$.
\begin{theorem} \label{Th-1+} Let  $r>0$ and let $R(z)$ be a rational function of degree~$n$ whose poles do not belong to the circle~$\gamma_r$. Set
\begin{equation*}
\label{mathcal_R}
{\mathcal R}(z):=R(z)\cdot\overline{R({r^2/\overline{z}})}.
\end{equation*}
We denote by $z_k$, $k=1,\ldots,\nu$, all pairwise distinct poles of the rational function~${\mathcal R}$ that lie in the disc~$|z|<r$ and by~$n_k$ their multiplicities. Set
\begin{equation}
\label{Blaschke_Product}
B(z)=\prod_{k=1}^{\nu}\left(r\frac{z-z_{k}}{r^2-z\overline{z_{k}}}
\right)^{n_k},
\qquad\mu(z)=\sum_{k=1}^{\nu}\frac{n_k(r^2-|z_k|^2)}{|z-z_k|^2},
\qquad\sum_{k=1}^{\nu} n_k=n.
\end{equation}
For $m\in {\mathbb N}$ and $\varphi\in \mathbb{R}$ let the notches~$\zeta_k(m,\varphi)$, $k=1,\ldots,mn+1$, be the roots of the equation~$\zeta B^m(\zeta)=re^{i\varphi}$. Then
\begin{equation*}
\label{Th1_Analogue_of_Th_A} \int_{\gamma_r}R(\zeta)|d\zeta|=2\pi
r\sum_{k=1}^{n+1}\frac{R({\zeta}_k)}
{\mu({\zeta}_k)+1},\qquad \zeta_k=\zeta_k(1,\varphi);
\end{equation*}
\begin{equation}
\label{Th1_Main_L_m}
\|R\|_{L^{2m}(\gamma_r)}^{2m}:=\int_{\gamma_r}|R(\zeta)|^{2m}|d\zeta|=2\pi
r\sum_{k=1}^{mn+1}\frac{|R({\zeta}_k)|^{2m}}{m{\mu}({\zeta}_k)+1},
\qquad \zeta_k=\zeta_k(m,\varphi).
\end{equation}
\end{theorem}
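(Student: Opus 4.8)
The plan is to reduce the new quadrature formula to an application of Theorem~\ref{DanSem_Th1} after a suitable choice of the data $\mathcal Z_\eta$, $s$ and the rational function fed into that theorem. The key observation is that $\mathcal R(z)=R(z)\cdot\overline{R(r^2/\overline z)}$ is, on the circle $\gamma_r$, simply $|R(\zeta)|^2$ (since $\overline\zeta=r^2/\zeta$ there), so that $\mathcal R^m$ restricted to $\gamma_r$ equals $|R|^{2m}$; moreover $\mathcal R$ is a rational function whose pole set is symmetric with respect to $\gamma_r$ — its poles in $|z|<r$ are exactly the $z_k$ with multiplicities $n_k$, and its poles in $|z|>r$ are the reflected points $r^2/\overline{z_k}$ with the same multiplicities. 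Thus $\mathcal R$ is precisely of the type to which Theorem~\ref{DanSem_Th1} applies once we enlarge the configuration so that every $z_k$ sits in its own ``cell''.

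First I would set up the configuration: let $\{z_1,\dots,z_\nu\}$ be the poles of $\mathcal R$ in $|z|<r$, adjoin the point $0$ if it is not already among them, and take $\mathcal Z_\eta$ to be this set (so $\eta=\nu$ or $\nu+1$), with $\mathcal Z_\eta^*$ its reflection, and set the exponent in Theorem~\ref{DanSem_Th1} equal to $s:=\max_k n_k$ — or, more cleanly, one can avoid matching multiplicities cell-by-cell by instead invoking the ``$2ms$'' part of Theorem~\ref{DanSem_Th1} with an $s$ large enough that $2ms\ge n_k$ for every $k$ while keeping $B$ the genuine Blaschke product $B(z)=\prod_k\big(r(z-z_k)/(r^2-z\overline{z_k})\big)^{n_k}$. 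The honest route, and the one matching the statement, is this: observe that the logarithmic-derivative weight $\mu(z)$ in~(\ref{Blaschke_Product}) is exactly $z B'(z)/B(z)$ evaluated on $\gamma_r$ minus the contribution of the factor $z/r$ — more precisely $\zeta(\zeta B^m(\zeta))'/(\zeta B^m(\zeta))=1+m\,\zeta B'(\zeta)/B(\zeta)=1+m\mu(\zeta)$ on $\gamma_r$, which is precisely the denominator appearing in~(\ref{Th1_Main_L_m}). So the notches defined here by $\zeta B^m(\zeta)=re^{i\varphi}$ are the zeros of $\zeta B^m(\zeta)-re^{i\varphi}$, a rational function of degree $mn+1$, and by the argument-principle/winding reasoning already given in the excerpt (the map $\zeta\mapsto \zeta B^m(\zeta)/r$ winds around the unit circle $mn+1$ times as $\zeta$ traverses $\gamma_r$), there are exactly $mn+1$ such notches, all simple and all on $\gamma_r$.

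The analytic core is then a residue computation. Consider $F(\zeta):=\mathcal R(\zeta)^m\cdot\dfrac{(\zeta B^m(\zeta))'}{\zeta B^m(\zeta)-re^{i\varphi}}\,d\zeta$ and integrate it over $\gamma_r$ (traversed once). On one hand, since on $\gamma_r$ we have $|\zeta B^m(\zeta)|=r$, the function $\zeta B^m(\zeta)-re^{i\varphi}$ has no zeros on $\gamma_r$ for generic $\varphi$, and pushing the contour slightly or using $|d\zeta|=\zeta\,d\zeta/(ir)$ type substitutions one relates $\int_{\gamma_r}|R|^{2m}|d\zeta|$ to $\int_{\gamma_r}\mathcal R(\zeta)^m\,\frac{d\zeta}{i\zeta}$ up to the standard normalizing constants; the logarithmic-derivative factor, having simple poles exactly at the notches $\zeta_k$ with residue $1$, turns the contour integral into a sum over the notches of $\mathcal R(\zeta_k)^m$ weighted by the reciprocal of $(\zeta B^m)'/(\,\cdot\,)$'s local behaviour, i.e. by $1/(m\mu(\zeta_k)+1)$ after using $\mathcal R(\zeta_k)=|R(\zeta_k)|^2$. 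On the other hand, one must check that the only other poles of $F$ inside (or outside) $\gamma_r$ — the poles of $\mathcal R^m$ at $z_k$ and at $r^2/\overline{z_k}$ — contribute zero; this is exactly where the structure of $\mathcal R$ and of $B$ interlocks, since $\mathcal R^m$ has a pole of order $mn_k$ at $z_k$ while $B^m$ has a zero of order $mn_k$ there, so $\mathcal R^m\cdot(\zeta B^m)'/(\zeta B^m-re^{i\varphi})$ is in fact holomorphic (or at worst has vanishing residue) at each $z_k$, and symmetrically at each $r^2/\overline{z_k}$; the point at $0$ and at $\infty$ are handled by the $z/r$ factor and a direct expansion. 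This cancellation step is the main obstacle: one has to verify carefully that \emph{every} pole of $\mathcal R^m$ off $\gamma_r$ is exactly neutralized by a zero of the Blaschke factor of matching order (including the subtle cases $z_k=0$ and its reflection $\infty$, which is why the hypothesis only bounds the \emph{sum} of the multiplicities at $0$ and $\infty$ by $2mn-1$ in the analogous classical statement), so that the residue sum reduces cleanly to the notch sum. Once the cancellation is established, comparing the two evaluations of the same contour integral and inserting $\mathcal R(\zeta_k)=|R(\zeta_k)|^2$ and the constant $2\pi r$ from $|d\zeta|=r\,d\varphi$ gives exactly~(\ref{Th1_Main_L_m}); the first formula is the case $m=1$ with $R$ in place of $\mathcal R$, for which the residues of the poles of $R$ itself cancel against $B$ in the same way and one does not take absolute values. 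Alternatively — and this is probably the slickest write-up — instead of redoing the residue calculus one feeds $\mathcal R$ (for the $L^{2m}$ formula) or $R$ (for the linear formula) directly into Theorem~\ref{DanSem_Th1} with the configuration $\mathcal Z_\eta$ above and $s=\max_k n_k$, notes that $\hat B$ built from that configuration coincides with $B^{1/?}$... — more cleanly, one chooses the configuration so that $\hat B=B$ as functions (taking each $\xi_k=z_k$ with its multiplicity absorbed), then $\hat\mu=\mu$, $\hat B^m=B^m$, the notches coincide, and Theorem~\ref{DanSem_Th1}'s formulas~(\ref{ThA-1}),~(\ref{ThA-2}) become ours after identifying $s\eta$-many notches with $mn+1$-many (the excess notches, if any, occur at points where the numerator $R$ vanishes to the appropriate order, contributing zero), and after the elementary identity $\hat\mu(\zeta_k)=m\mu(\zeta_k)+1$ coming from the extra $z/r$ factor. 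I expect the bookkeeping of turning the cell-wise bound of Theorem~\ref{DanSem_Th1} into the pole-wise statement, and correctly tracking the two exceptional points $0$ and $\infty$, to be the only genuinely delicate part; everything else is the winding-number count already supplied in the excerpt plus a routine residue/substitution argument.
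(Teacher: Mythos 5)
There is a genuine gap: neither of your two routes actually establishes the quadrature identity, and the paper's key device is absent from both. In your residue route the claimed cancellation is false as stated. At a pole $z_k$ (with $z_k\neq 0$) the function $\mathcal{R}^m$ has a pole of order $mn_k$, while the factor $(\zeta B^m(\zeta))'/(\zeta B^m(\zeta)-re^{i\varphi})$ vanishes there only to order $mn_k-1$ (the denominator is $\approx -re^{i\varphi}\neq 0$ at $z_k$, and the numerator is the derivative of a function with a zero of order $mn_k$). So the integrand has a \emph{simple} pole at each $z_k$ with, in general, nonzero residue; it is not ``holomorphic or with vanishing residue'' there. Any rescue would need a different mechanism (e.g.\ pairing the residue at $z_k$ against the one at $r^2/\overline{z_k}$ using the symmetry of $\mathcal{R}$ and $\overline{B(\zeta)}=1/B(\zeta)$ on $\gamma_r$), and this is precisely the step you flag as ``the main obstacle'' and leave unverified — it is the whole content of the theorem, not bookkeeping. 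Your alternative route of feeding $\mathcal{R}$ (or $R$) directly into Theorem~\ref{DanSem_Th1} cannot work structurally: that theorem requires the configuration ${\mathcal Z}_\eta$ to consist of \emph{pairwise distinct} points entering $\hat B$ with exponent one, so its weight $\hat\mu$ counts each distinct pole once, without the multiplicities $n_k$ — this is exactly the ``cell-wise'' shortcoming the present theorem is designed to remove. No admissible choice of ${\mathcal Z}_\eta$ and $s$ makes $\hat B$ equal to $B$ with the exponents $n_k$ ``absorbed,'' the notch equations $\hat B^{s}(\zeta)=e^{i\varphi}$ and $\zeta B^{m}(\zeta)=re^{i\varphi}$ define different point sets, and the assertion that excess notches ``contribute zero'' has no justification.

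What the paper actually does, and what is missing from your proposal, is the $\varepsilon$-separation of poles: one first perturbs $R$ (a change of variable $z\mapsto z/(1+\varepsilon z)$ to avoid $\infty$, then a small splitting of the denominator into simple zeros $\xi_{k,j}$ avoiding $\gamma_r$, the origin, and symmetric pairs) so that $R_{\varepsilon,1}$ has $n$ simple, pairwise distinct poles. After symmetrizing and adjoining $0$ and $\infty$ one is in the setting of Theorem~\ref{DanSem_Th1} with $s=1$, $\eta=n+1$, applied to $R_{\varepsilon,1}$ and to $R_{\varepsilon,2}(z)=R_{\varepsilon,1}(z)\overline{R_{\varepsilon,1}(r^2/\overline z)}$; the multiplicity-weighted $\mu$ and the Blaschke product $B$ with exponents $n_k$ then arise in the limit $\varepsilon\to 0$ from the uniform convergence $R_{\varepsilon,1}\to R$, $\mu_\varepsilon\to\mu$, $B_\varepsilon\to B$ on $\gamma_r$ (and convergence of the notches), with the case of general $m$ obtained by applying the result to $R^m$. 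Your correct observations — the identity $\zeta(\zeta B^m)'/(\zeta B^m)=1+m\mu$ on $\gamma_r$ and the winding-number count of the $mn+1$ notches — are compatible with this, but without the separation-and-limit argument (or a completed residue computation) the proof is not there.
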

\begin{proof} To apply the formula~(\ref{ThA-1}), we use the procedure of
\textit{$\varepsilon$-separation of the poles} of the function $R$ that goes as follows. Set
$R_{\varepsilon,0}(z):=R(z/(1+\varepsilon z))$. Suppose that~$R$
has $\kappa$ pairwise distinct poles $t_k$ of multiplicity $m_k$,
$\sum_{k=1}^{\kappa} m_k=n=\deg R$ (possibly one of the poles is at infinity).
 Then the new rational function~$R_{\varepsilon,0}$ has the poles $v_k=t_k/(1-\varepsilon t_k)$ of multiplicity~$m_k$. Therefore
for a sufficiently small $\varepsilon>0$ the function
$R_{\varepsilon,0}(z)=P(z)/Q(z)$ has no poles on $\gamma_r$ and at infinity and furthermore $\deg R_{\varepsilon,0}=\deg R$.
Moreover, $R_{\varepsilon,0}(\zeta)\to R(\zeta)$ uniformly on $\gamma_r$ as $\varepsilon\to 0$.

We now exchange the polynomial
$$
 Q(z)=C\prod_{k=1}^{\kappa} (z-v_k)^{m_k}\qquad \hbox{for the polynomial}\qquad
Q_{\varepsilon}(z)=C\prod_{k=1}^{\kappa}\prod_{j=1}^{m_k}
(z-\xi_{k,j})
$$
with the simple zeros $\xi_{k,j}\notin \gamma_r$ (they are simple with respect to the whole double product) such that
$|v_k-\xi_{k,j}|\le \varepsilon$. In addition, we suppose that the set  $\cup_{k,j}\xi_{k,j}$ does not contain either the point
$z=0$ or pairs of points symmetric with respect to the circle $\gamma_r$.
Put $R_{\varepsilon,1}=P/Q_{\varepsilon}$. Obviously,
$R_{\varepsilon,1}(\zeta)\to R_{\varepsilon,0}(\zeta)\to R(\zeta)$ uniformly on $\gamma_r$
as $\varepsilon\to 0$.

Thus the function $R_{\varepsilon,1}$ has $n$ simple poles that differ from
$z=0$ and $z=\infty$ and have no pairs of poles symmetric with respect to  $\gamma_r$. We now symmetrize the set of these poles
with respect to $\gamma_r$ and add the points $0$ and $\infty$ to the set obtained. Then we get
the set $Z_{n+1}(\varepsilon)\cup
Z^*_{n+1}(\varepsilon)$ of the form (\ref{Sets}) with some
$\xi_k=\xi_k(\varepsilon)$ and $\eta=n+1$.

The functions $R_{\varepsilon,1}$ and $R_{\varepsilon,2}(z):=
R_{\varepsilon,1}(z)\cdot\overline{R_{\varepsilon,1}({r^2/\overline{z}})}$
have simple poles belonging to $Z_{n+1}(\varepsilon)\cup
Z^*_{n+1}(\varepsilon)$ and, consequently, satisfy the assumptions of Theorem~\ref{DanSem_Th1} with $s=1$ and $\eta=n+1$. So the formula (\ref{ThA-1}) holds for them:
\begin{equation*}
\label{Th1(1+++)}
\int_{\gamma_r}R_{\varepsilon,j}(\zeta)|d\zeta|=2\pi
r\sum_{k=1}^{n+1}\frac{R_{\varepsilon,j}(\zeta_k)}{{\mu_{\varepsilon}}(\zeta_k)},
\qquad {\mu}_{\varepsilon}(\zeta)=
1+\sum_{k=2}^{n+1}\frac{r^2-|\xi_k|^2} {|\zeta-\xi_{k}|^2},\qquad
\zeta\in\gamma_r,
\end{equation*}
where $j=1,2$ and the notches $\zeta_k=\zeta_k(\varphi)$
satisfy the equation
$\zeta{B}_{\varepsilon}(\zeta)=re^{i\varphi}$, where
\begin{equation*}
\label{Blaske_epsilon} {B}_{\varepsilon}(z)= \prod_{k=2}^{n+1}
r\frac{z-\xi_{k}} {r^2-z\overline{\xi_{k}}},\qquad
\xi_k=\xi_k(\varepsilon)\in Z_{n+1}(\varepsilon).
\end{equation*}
This implies the required statements of Theorem~\ref{Th-1+} for $m=1$ if
we take into account the uniform convergence
$R_{\varepsilon,1}(\zeta)\to R(\zeta)$,
$R_{\varepsilon,2}(\zeta)=|R_{\varepsilon,1}(\zeta)|^2\to
|R(\zeta)|^2={\mathcal R}(\zeta)$, $\mu_\varepsilon(\zeta)\to
\mu(\zeta)$, $B_\varepsilon(\zeta)\to B(\zeta)$ on $\gamma_r$ as $\varepsilon\to 0$.

Finally, considering the rational function $R^m(z)$ instead of $R(z)$, we obtain (\ref{Th1_Main_L_m}) for any $m\in
\mathbb{N}$.
\end{proof}

\subsection{The case of the real axes $\mathbb{R}$}
We need the following theorem from~\cite{DanSem2016} (see
\cite[Theorem 4]{DanSem2016} and its proof). Let the set ${\mathcal F}_\eta$ contain $2\eta$ pairwise distinct points not belonging to  ${\mathbb R}$ and have the form
\begin{equation*}
\label{set_F} {\mathcal F}_\eta=\{z_1,z_2,\ldots,z_{\eta}\}\cup
\{\overline{z_1},\overline{z_2},\ldots,\overline{z_{\eta}}\},\qquad
z_k\in {\mathbb C}^{+},\qquad z_k\ne\infty.
\end{equation*}
Put
\begin{equation}
\label{B-and-MU}
\hat{B}_{*}(x)=\prod_{k=1}^{\eta}\frac{x-z_k}{x-\overline z_k},
\qquad \hat{\mu}_{*}(x)=\frac{1}{2
i}\frac{\hat{B}'_{*}(x)}{\hat{B}_{*}(x)}=\sum_{k=1}^{\eta}\frac{{\rm
Im}\,z_k}{|x-\overline z_k|^2},\quad x\in {\mathbb R}.
\end{equation}
As in the case of the circle,  we use the notches, satisfying the equation (with respect to $x$)
$$
\hat{B}^s_{*}(x)=e^{i\varphi},\qquad s\in {\mathbb N},\qquad
\varphi\in {\mathbb R},
$$
in order to construct variable quadrature nodes on~${\mathbb R}$.
Since $|\hat{B}^s_{*}(x)|=1\Leftrightarrow x\in {\mathbb R}$ and
$\arg \hat{B}_{*}(x)$ is an increasing function on ${\mathbb R}$,
this equation has exactly $s\eta$ distinct real roots
 $x_k(\varphi)$ for each $\varphi$ (one of the roots may be at infinity). For a fixed $s$, if the parameter $\varphi$ increases continuously from $0$ up to $2\pi n s$, then each root $x_k(\varphi)$ runs around the circle
$\overline {\mathbb R}\subset \overline {\mathbb C}$ continuously.
\begin{theoremA}[\cite{DanSem2016}]
\label{Th_Int_formula_Real_axis_DanSem}
Let $R(z)$ be a proper rational fraction whose poles belong to  ${\mathcal F}_\eta$
and are of multiplicity at most $s$. Then for any $\varphi\in(0,2\pi)$ and the notches $x_k(\varphi)\in {\mathbb
R}$, satisfying the equation $\hat{B}_{*}^s(x)=e^{i\varphi}$,
we have
\begin{equation}
\label{Int_formula_Real_axis_DanSem}
 \int_{\mathbb R} R(x)
\,dx=\frac{\pi}{s}\sum_{k=1}^{s\eta}\frac{
R\left(x_k(\varphi)\right)} {\hat{\mu}_{*}(x_k(\varphi))}
\end{equation}
$($if the integral converges$)$. Moreover, for the notches   $x_k(\varphi)\in {\mathbb R}$, satisfying the equation
$\hat{B}_{*}^{2sm}(x)=e^{i\varphi}$, we have
\begin{equation}
\label{Int_formula_Real_axis_DanSem++}
\|R\|_{L^{2m}(\mathbb{R})}^{2m}=\frac{\pi}{2sm}\sum_{k=1}^{2sm\eta}
\frac{|R(x_k(\varphi))|^{2m}}{\hat{\mu}_{*}(x_k(\varphi))},\qquad
m\in \mathbb{N}.
\end{equation}
\end{theoremA}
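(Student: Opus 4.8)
The plan is to prove the first identity $(\ref{Int_formula_Real_axis_DanSem})$ directly by residue calculus and then to obtain the $L^{2m}$-formula $(\ref{Int_formula_Real_axis_DanSem++})$ from it as an immediate corollary. The engine of the argument is the logarithmic derivative
\[
\frac{\hat{B}_{*}'(x)}{\hat{B}_{*}(x)}=\sum_{k=1}^{\eta}\Bigl(\frac{1}{x-z_k}-\frac{1}{x-\overline{z_k}}\Bigr)=\sum_{k=1}^{\eta}\frac{2i\,\Im z_k}{(x-z_k)(x-\overline{z_k})}=2i\,\hat{\mu}_{*}(x),
\]
an identity of rational functions consistent with $(\ref{B-and-MU})$; raising it to the $s$-th power gives $(\hat{B}_{*}^{s})'=2is\,\hat{\mu}_{*}\hat{B}_{*}^{s}$. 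Hence at a notch $x_k$, where $\hat{B}_{*}^{s}(x_k)=e^{i\varphi}$, one has $(\hat{B}_{*}^{s})'(x_k)=2is\,e^{i\varphi}\hat{\mu}_{*}(x_k)\ne0$, since $\hat{\mu}_{*}>0$ on $\mathbb{R}$; so every notch is a simple zero of $\hat{B}_{*}^{s}-e^{i\varphi}$, and for the rational function $F:=R/(\hat{B}_{*}^{s}-e^{i\varphi})$ one gets
\[
\sum_{k=1}^{s\eta}\frac{R(x_k)}{\hat{\mu}_{*}(x_k)}=2is\,e^{i\varphi}\sum_{k=1}^{s\eta}\operatorname*{Res}_{x=x_k}F .
\]
Thus $(\ref{Int_formula_Real_axis_DanSem})$ is equivalent to the claim $\int_{\mathbb{R}}R\,dx=2\pi i\,e^{i\varphi}\sum_{k}\operatorname*{Res}_{x=x_k}F$.

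Next I would locate all poles of $F$ on the Riemann sphere. At each $\overline{z_k}$ the product $\hat{B}_{*}$ has a simple pole, so $1/(\hat{B}_{*}^{s}-e^{i\varphi})$ has a zero of order $s$ there, which absorbs any pole of $R$ of order at most $s$; thus $F$ is holomorphic at every pole of $R$ in $\mathbb{C}^{-}$. Since $R$ is proper and $\int_{\mathbb{R}}R$ converges, $R(x)=O(x^{-2})$; moreover $\hat{B}_{*}^{s}(x)\to1\ne e^{i\varphi}$ as $x\to\infty$ (here $\varphi\in(0,2\pi)$), so $F(x)=O(x^{-2})$ and $F$ is regular at $\infty$. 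Consequently the poles of $F$ are exactly the $s\eta$ notches on $\mathbb{R}$ and the poles $z_j$ of $R$ lying in $\mathbb{C}^{+}$, and the residue theorem on $\overline{\mathbb{C}}$ gives $\sum_{k}\operatorname*{Res}_{x=x_k}F=-\sum_{z_j\in\mathbb{C}^{+}}\operatorname*{Res}_{z=z_j}F$. Independently, closing the contour of $\int_{\mathbb{R}}R\,dx$ in the upper half-plane (the large semicircle contributing $O(1/T)$) yields $\int_{\mathbb{R}}R\,dx=2\pi i\sum_{z_j\in\mathbb{C}^{+}}\operatorname*{Res}_{z=z_j}R$.

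Comparing these two expressions, $(\ref{Int_formula_Real_axis_DanSem})$ reduces to $\sum_{z_j\in\mathbb{C}^{+}}\operatorname*{Res}_{z=z_j}\bigl(R+e^{i\varphi}F\bigr)=0$. Now $R+e^{i\varphi}F=R\,\hat{B}_{*}^{s}/(\hat{B}_{*}^{s}-e^{i\varphi})$, and at each pole $z_j\in\mathbb{C}^{+}$ of $R$ the factor $\hat{B}_{*}^{s}/(\hat{B}_{*}^{s}-e^{i\varphi})$ vanishes to order $s$ — because $\hat{B}_{*}$ has a simple zero at $z_j$ while $\hat{B}_{*}^{s}(z_j)-e^{i\varphi}=-e^{i\varphi}\ne0$ — and since $R$ has a pole of order at most $s$ at $z_j$, the product is holomorphic at $z_j$ and contributes zero residue. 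This finishes $(\ref{Int_formula_Real_axis_DanSem})$. To deduce $(\ref{Int_formula_Real_axis_DanSem++})$ I would apply the formula just proved to $\mathcal{R}(z):=\bigl(R(z)\,\overline{R(\overline{z})}\bigr)^{m}$, which on $\mathbb{R}$ equals $|R|^{2m}$, is proper, has poles in the conjugation-symmetric set $\mathcal{F}_{\eta}$ of multiplicity at most $2sm$, and has convergent integral over $\mathbb{R}$; running the result with $s$ replaced by $2sm$ (notches of $\hat{B}_{*}^{2sm}$) produces exactly $(\ref{Int_formula_Real_axis_DanSem++})$, constant $\pi/(2sm)$ included.

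The routine points I am glossing over are the bookkeeping of pole multiplicities in the residue count and the degenerate configurations — a notch escaping to $\infty$, or $z_j$ coinciding with some $\overline{z_k}$ — which are excluded here by the standing hypotheses and the restriction $\varphi\in(0,2\pi)$, and otherwise dealt with by continuity in $\varphi$ and a limiting argument. I expect the only genuinely delicate step to be the last one, namely checking that $R\,\hat{B}_{*}^{s}/(\hat{B}_{*}^{s}-e^{i\varphi})$ is holomorphic at the upper poles of $R$: this is exactly where the hypothesis ``multiplicity at most $s$'' is consumed, and is the reason the exponent of $\hat{B}_{*}$ in the notch equation must match the allowed pole order. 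One could instead transport the circle formula of Theorem~\ref{DanSem_Th1} to $\mathbb{R}$ by a Cayley transform, but keeping track of the Jacobian factor at the image of $\infty$ makes the direct residue argument above the cleaner route.
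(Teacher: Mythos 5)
Your argument is correct, but note that for this particular statement there is no proof in the paper to compare against: Theorem~\ref{Th_Int_formula_Real_axis_DanSem} is imported verbatim from \cite{DanSem2016} (``see Theorem 4 and its proof'' there), and the present paper only uses it as a black box, deriving its own Theorem~\ref{Th_Real_axis} from it by the $\varepsilon$-separation of poles. Your residue-theoretic derivation is a sound, self-contained substitute. The key verifications all check out: on $\mathbb{R}$ one has $\hat{B}_{*}'/\hat{B}_{*}=2i\hat{\mu}_{*}$ with $\hat{\mu}_{*}>0$, so every solution of $\hat{B}_{*}^{s}(x)=e^{i\varphi}$ is a simple zero of $\hat{B}_{*}^{s}-e^{i\varphi}$; since $|\hat{B}_{*}|<1$ in $\mathbb{C}^{+}$, $=1$ on $\mathbb{R}$, $>1$ in $\mathbb{C}^{-}$, and the leading coefficient of the numerator of $\hat{B}_{*}^{s}-e^{i\varphi}$ is $1-e^{i\varphi}\neq 0$ for $\varphi\in(0,2\pi)$, there are exactly $s\eta$ notches, all real, finite and distinct, so $\sum_k R(x_k)/\hat{\mu}_{*}(x_k)=2ise^{i\varphi}\sum_k\operatorname{Res}_{x_k}F$ with $F=R/(\hat{B}_{*}^{s}-e^{i\varphi})$ is legitimate. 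The decay $R=O(x^{-2})$ forced by convergence, the vanishing residue of $F$ at infinity, the regularity of $F$ at the points $\overline{z_k}$ (the order-$s$ zero of $1/(\hat{B}_{*}^{s}-e^{i\varphi})$ absorbing poles of order $\le s$), and the holomorphy of $R+e^{i\varphi}F=R\hat{B}_{*}^{s}/(\hat{B}_{*}^{s}-e^{i\varphi})$ at the upper poles of $R$ are exactly the places where ``proper'', ``multiplicity at most $s$'' and $\varphi\in(0,2\pi)$ are consumed, and you identify them correctly. The passage to (\ref{Int_formula_Real_axis_DanSem++}) via $\bigl(R(z)\overline{R(\overline z)}\bigr)^{m}$, whose poles lie in the conjugation-symmetric set ${\mathcal F}_\eta$ with multiplicity at most $2sm$ and which equals $|R|^{2m}$ on $\mathbb{R}$, is the same device the paper itself uses in the proofs of Theorems~\ref{Th-1+} and~\ref{Th_Real_axis}, so that step is fully in the spirit of the source. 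The only caveat worth recording explicitly is that the second formula should likewise be read with $e^{i\varphi}\neq 1$ (otherwise one notch sits at infinity and its term must be interpreted by a limit), which you flag among the degenerate configurations.
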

Theorem~\ref{Th_Int_formula_Real_axis_DanSem} and the method of $\varepsilon$-separation of the poles give
\begin{theorem}
\label{Th_Real_axis}
Let  $R(z)$ be a proper rational fraction of degree $n$ whose poles do not belong to the real axis
$\mathbb{R}$. Set
\begin{equation*}
\label{mathcal_R_1} {\mathcal
R}_{*}(z):=R(z)\cdot\overline{R(\overline{z})}.
\end{equation*}
We denote by $z_k$, $k=1,\ldots,\nu$, all pairwise distinct poles of the rational function ${\mathcal R}_{*}$ which lie in the half-plane $\mathbb{C}^+$
 and by $n_k$ their multiplicities. Set
\begin{equation}
\label{Blaschke_Product_Real_Axis}
B_{*}(x)=\prod_{k=1}^{\nu}\left(\frac{x-z_{k}}{x-\overline{z}_{k}}
\right)^{n_k}, \qquad\mu_{*}(x)=\sum_{k=1}^{\nu}\frac{n_k{\rm
Im}\,z_k}{|x-\overline z_k|^2}, \qquad\sum_{k=1}^{\nu} n_k=n.
\end{equation}

Let $m\in {\mathbb N}$, $\varphi\in {\mathbb R}$ and let
the $($real$)$ notches $x_k=x_k(m,\varphi)$,
$k=1,\ldots,mn$, be the roots of the equation
$B_{*}^m(z)=e^{i\varphi}$. Then $($if the integrals converge$)$ we have
\begin{equation*}
\label{Th_Analogue_of_Th_B}
\int_{\mathbb{R}}R^m(x)dx=\frac{\pi}{m}
\sum_{k=1}^{mn}\frac{R^m(x_k)}{\mu_{*}(x_k)},
\end{equation*}
\begin{equation}
\label{Th1_Main_L_m_Real_axis}
\|R\|_{L^{2m}(\mathbb{R})}^{2m}:=\int_{\mathbb{R}}|R(x)|^{2m}
dx=\frac{\pi}{m}
 \sum_{k=1}^{mn}\frac{|R(x_k)|^{2m}}{{\mu_{*}}(x_k)},
\end{equation}
In the case $\varphi=0\,({\rm mod}\, 2\pi)$ one of the notches is at infinity.
\end{theorem}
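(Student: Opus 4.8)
The plan is to follow the proof of Theorem~\ref{Th-1+} almost line by line, replacing the circle $\gamma_r$ by $\mathbb{R}$, the inversion $z\mapsto r^2/\overline z$ by the conjugation $z\mapsto\overline z$, the pair $\hat B,\hat\mu$ by $\hat B_*,\hat\mu_*$ from~(\ref{B-and-MU}), and Theorem~\ref{DanSem_Th1} by Theorem~\ref{Th_Int_formula_Real_axis_DanSem}. One step of the circle argument is now superfluous: since $R$ is \emph{proper} it already has no pole at infinity, so the preliminary M\"obius substitution $R_{\varepsilon,0}(z)=R(z/(1+\varepsilon z))$ is not needed here. It also suffices to treat the case $m=1$: applying the $m=1$ identities to $R^m$ in place of $R$ and noting that $R^m(z)\,\overline{R^m(\overline z)}={\mathcal R}_*^m(z)$ has the same $\nu$ distinct poles $z_k\in\mathbb{C}^+$ as ${\mathcal R}_*$ but with multiplicities $mn_k$, one sees that the associated Blaschke product becomes $B_*^m$, the weight becomes $m\mu_*$, the number of notches becomes $mn$, and the prefactor $\pi$ becomes $\pi/m$; together with $R^m(x_k)\overline{R^m(x_k)}=|R(x_k)|^{2m}$ at the real notches, this delivers both identities for arbitrary $m$.

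For $m=1$ I first carry out the $\varepsilon$-separation of the poles. Writing $R=P/Q$ in lowest terms with $Q(x)=C\prod_{k=1}^{\kappa}(x-t_k)^{m_k}$, all $t_k\notin\mathbb{R}$ and $\sum_k m_k=n$, I replace $Q$ by $Q_\varepsilon(x)=C\prod_{k=1}^{\kappa}\prod_{j=1}^{m_k}(x-\xi_{k,j})$, where the $\xi_{k,j}\notin\mathbb{R}$ are pairwise distinct, satisfy $|\xi_{k,j}-t_k|\le\varepsilon$, and are chosen (generically) so that the $2n$ numbers $\{\xi_{k,j}\}\cup\{\overline{\xi_{k,j}}\}$ are pairwise distinct. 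Then $R_\varepsilon:=P/Q_\varepsilon$ is a proper fraction of degree $n$ with $n$ distinct simple poles off $\mathbb{R}$ and $R_\varepsilon\to R$ uniformly on $\mathbb{R}$ as $\varepsilon\to0$, while ${\mathcal R}_{*,\varepsilon}(z):=R_\varepsilon(z)\,\overline{R_\varepsilon(\overline z)}$ has precisely the $2n$ distinct non-real, simple poles $\{\xi_{k,j}\}\cup\{\overline{\xi_{k,j}}\}$. Hence this set is of the form ${\mathcal F}_\eta$ with $\eta=n$, and the poles of both $R_\varepsilon$ and ${\mathcal R}_{*,\varepsilon}$ belong to it.

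Now apply formula~(\ref{Int_formula_Real_axis_DanSem}) of Theorem~\ref{Th_Int_formula_Real_axis_DanSem} with $s=1$, $\eta=n$ to the two proper fractions $R_\varepsilon$ and ${\mathcal R}_{*,\varepsilon}$: for each $\varphi\in(0,2\pi)$ and the $n$ notches $x_k$ solving $\hat B_{*,\varepsilon}(x)=e^{i\varphi}$, with $\hat B_{*,\varepsilon},\hat\mu_{*,\varepsilon}$ built from that set by~(\ref{B-and-MU}), one gets $\int_{\mathbb{R}}R_\varepsilon(x)\,dx=\pi\sum_{k=1}^{n}R_\varepsilon(x_k)/\hat\mu_{*,\varepsilon}(x_k)$ and, since ${\mathcal R}_{*,\varepsilon}(x)=|R_\varepsilon(x)|^2$ for real $x$, also $\|R_\varepsilon\|_{L^2(\mathbb{R})}^2=\int_{\mathbb{R}}{\mathcal R}_{*,\varepsilon}(x)\,dx=\pi\sum_{k=1}^{n}|R_\varepsilon(x_k)|^2/\hat\mu_{*,\varepsilon}(x_k)$. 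As $\varepsilon\to0$ the $n$ simple poles of ${\mathcal R}_{*,\varepsilon}$ in $\mathbb{C}^+$ tend, with the correct limiting multiplicities, to the poles $z_k$ of ${\mathcal R}_*$, so $\hat B_{*,\varepsilon}\to B_*$ and $\hat\mu_{*,\varepsilon}\to\mu_*$ uniformly on compact subsets of $\overline{\mathbb{R}}$, the roots of $\hat B_{*,\varepsilon}(x)=e^{i\varphi}$ converge to those of $B_*(x)=e^{i\varphi}$, and $R_\varepsilon\to R$ uniformly on $\mathbb{R}$. Passing to the limit gives~(\ref{Th1_Main_L_m_Real_axis}) and its companion for $m=1$; the boundary case $\varphi=0\ (\mathrm{mod}\ 2\pi)$, in which one notch lies at infinity, follows by continuity of both sides in $\varphi$.

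The only point that is not a routine translation of the circle-case proof is the limit $\varepsilon\to0$ in the integrals over the non-compact line. For the $L^2$ identity (hence, via the reduction above, for all $L^{2m}$ identities) it is immediate, since $|R_\varepsilon(x)|^2\le C(1+x^2)^{-1}$ uniformly for small $\varepsilon$ and dominated convergence applies. For the identity involving $\int_{\mathbb{R}}R\,dx$ when $R$ is only conditionally integrable (possible for $m=1$ when $\deg P=n-1$), one splits off the summand of $R_\varepsilon$ decaying like $a_\varepsilon\,x/(1+x^2)$, whose symmetric truncated integrals over $[-N,N]$ vanish identically by oddness, and applies dominated convergence to the remainder, which is $O(x^{-2})$ at infinity uniformly in $\varepsilon$. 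This is the main (though minor) technical obstacle; everything else reproduces the scheme of Theorem~\ref{Th-1+}.
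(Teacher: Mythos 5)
Your proposal follows essentially the same route as the paper: reduce to $m=1$ (then apply the result to $R^m$, which turns $B_*$, $\mu_*$ into $B_*^m$, $m\mu_*$), perform the $\varepsilon$-separation of the poles with no symmetric pairs relative to $\mathbb{R}$ and no M\"obius change of variables, apply Theorem~\ref{Th_Int_formula_Real_axis_DanSem} with $s=1$, $\eta=n$ to $R_\varepsilon$ and to $R_\varepsilon(z)\overline{R_\varepsilon(\overline z)}$, and pass to the limit. Your treatment of the convergence of the integrals over the non-compact line (dominated convergence, plus splitting off the odd $x/(1+x^2)$-type term in the conditionally convergent case) just makes explicit a point the paper only mentions in passing, so the argument is correct and matches the paper's.
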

It is sufficient to prove the statement for $m=1$ and then, considering $R^m(z)$ instead of $R(z)$
and $B_{*}^m$ and  $m\mu_{*}$ instead of $B_{*}$ and
$\mu_{*}$, correspondingly, we get the general form. For this we use the method of $\varepsilon$-separation of the poles analogous  to the one used in Section~\ref{Section_Circle} (the difference is that in the current situation the rational function  $R=P/Q$ has no poles at $z=0$ or $z=\infty$ (and therefore the change of variables is not necessary) and the separation is carried out so that there are no pairs of poles symmetric with respect to the real axis). Using the same notation and sequence of operations, we get the rational functions $R_{\varepsilon,1}=P/Q_{\varepsilon}$ and $R_{\varepsilon,2}(z):=
R_{\varepsilon,1}(z)\cdot\overline{R_{\varepsilon,1}(\overline{z})}$, whose poles are simple and belonging to the set of the form ${\mathcal
F}_{n}(\varepsilon)$. This set determines  ${\mu}_\varepsilon$ and ${B}_\varepsilon$
(as in (\ref{B-and-MU})). Besides the uniform convergence
$R_{\varepsilon,1}(x)\to R(x)$, $R_{\varepsilon,2}(x)\to
|R(x)|^2$, ${\mu}_\varepsilon(\zeta)\to {\mu}_{*}(\zeta)$,
${B}_\varepsilon(\zeta)\to {B}_{*}(\zeta)$ on ${\mathbb R}$, we also have to take into account that the corresponding integrals converge to each other.
It remains to use the formula~(\ref{Int_formula_Real_axis_DanSem}) with $s=1$ and $\eta=n$.

\subsection{The case of the real semiaxes $\mathbb{R}^+$}
Let  $R(z)$ be a proper rational fraction of degree~$n$ whose poles~$r_ke^{i\varphi_k}$ are of multiplicity $n_k$ so that $\sum_{k=1}^{\nu}
n_k=n$, $\varphi_k\in(0,2\pi)$ (not belonging to the real semiaxis~$\mathbb{R}^+$). Put
\begin{equation*}
\label{mathcal_R_1_semiaxis} {\mathcal
R}_{\mathbb{R}^+}(z):=R_0(z)\cdot\overline{R_0(\overline{z})},\qquad
R_0(z):=R(z^2).
\end{equation*}
Note that the poles of ${\mathcal R}_{\mathbb{R}^+}$ on the upper half-plane form the set
$$
\{z_k\}\cup\{-\overline{z_k}\}:=\{\sqrt{r_k}e^{i\varphi_k/2}\}
\cup\{-\sqrt{r_k}e^{-i\varphi_k/2}\}\subset {\mathbb C}^+,\qquad
k=\overline{1,\nu},
$$
where the poles~$z_k$, $-\overline{z_k}$ are of multiplicity~$n_k$,
$\sum_{k=1}^{\nu} n_k=n$. As in
$(\ref{Blaschke_Product_Real_Axis})$, we introduce the functions
$$
B_{*}(x)=\prod_{k=1}^{\nu}\left(\frac{x-z_{k}}{x-\overline{z}_{k}}
\frac {x+\overline{z}_{k}}{x+z_{k}} \right)^{n_k},\;
 \mu_*(x)=\sum_{k=1}^{\nu}\left(\frac{n_k\sqrt{r_k}
 \sin{\varphi_k}/{2}}
{|x-\sqrt{r_k}e^{i\varphi_k/2}|^2} +\frac{n_k\sqrt{r_k}
 \sin{\varphi_k}/{2}}
{|x+\sqrt{r_k}e^{-i\varphi_k/2}|^2}\right).
$$
\begin{theorem}
\label{Th_Real_semiaxis}
Let $m\in {\mathbb N}$ and let $R(z)$ satisfy the above-mentioned assumptions. We denote by $x_k=x_k(m,\varphi)$,
$k=1,\ldots,2mn$, the $($real$)$ notches, being the roots of the equation $B_{*}^m(x)=e^{i\varphi}$. Then
\begin{equation}
\label{Quadrature_Semiaxis}
\int_{\mathbb{R}^+}\frac{R(x)}{\sqrt{x}}dx=\pi
\sum_{k=1}^{2n}\frac{R(x_k^2)}{\mu_*(x_k)},\quad
\|R\|_{L^{2m}(\mathbb{R}^+;\frac{1}{\sqrt{x}})}^{2m}:=
\int_{\mathbb{R}^+}\frac{|R(x)|^{2m}}{\sqrt{x}} dx=\frac{\pi}{m}
 \sum_{k=1}^{2mn}\frac{|R(x_k^2)|^{2m}}{{\mu_{*}}(x_k)}.
\end{equation}
In the case $\varphi=0\,({\rm mod}\, 2\pi)$ one of the notches is at
infinity.
\end{theorem}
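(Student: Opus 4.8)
The plan is to reduce Theorem~\ref{Th_Real_semiaxis} to Theorem~\ref{Th_Real_axis} by the substitution $x=t^2$ applied to the rational function $R_0(z)=R(z^2)$.

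First I would record the structure of $R_0$. Since $R$ is proper of degree $n$ and has no poles on $\mathbb{R}^+$ (including the origin), the function $R_0(z)=R(z^2)$ is again proper, has degree $2n$, and its poles are the $2\nu$ distinct points $\pm\sqrt{r_k}e^{i\varphi_k/2}$ with multiplicities $n_k$; none of them is real because $\varphi_k/2\in(0,\pi)$. Forming ${\mathcal R}_{*}(z):=R_0(z)\overline{R_0(\bar z)}$ for $R_0$ one obtains exactly ${\mathcal R}_{\mathbb{R}^+}(z)$, and a short computation shows that its poles lying in $\mathbb{C}^{+}$ are precisely the points $z_k=\sqrt{r_k}e^{i\varphi_k/2}$ and $-\overline{z_k}=-\sqrt{r_k}e^{-i\varphi_k/2}$, each of multiplicity $n_k$ (here one uses $\operatorname{Im}(-\overline{z_k})=\operatorname{Im}z_k$). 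Substituting these data into the definitions~(\ref{Blaschke_Product_Real_Axis}) of $B_{*}$ and $\mu_{*}$ from Theorem~\ref{Th_Real_axis}, and using that $|x-\overline{z_k}|=|x-z_k|$ and $|x+z_k|=|x+\overline{z_k}|$ for $x\in\mathbb{R}$, one recovers precisely the functions $B_{*}$ and $\mu_{*}$ displayed before the statement. In particular, the degree of the auxiliary function in Theorem~\ref{Th_Real_axis} equals $2n$, so the equation $B_{*}^{m}(x)=e^{i\varphi}$ has exactly $2mn$ real notches, as claimed.

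Next comes the change of variables. The function $R_0(t)=R(t^2)$ is even, hence for any measurable $g$ one has $\int_{\mathbb{R}}g(t^2)\,dt=2\int_{0}^{\infty}g(t^2)\,dt=\int_{\mathbb{R}^+}g(x)\,x^{-1/2}\,dx$ after the substitution $x=t^2$. Taking $g=R$ gives $\int_{\mathbb{R}}R_0(t)\,dt=\int_{\mathbb{R}^+}R(x)\,x^{-1/2}\,dx$, and taking $g=|R|^{2m}$ gives $\|R_0\|_{L^{2m}(\mathbb{R})}^{2m}=\|R\|_{L^{2m}(\mathbb{R}^+;\,x^{-1/2})}^{2m}$; the integral on $\mathbb{R}^+$ in the first identity converges because $R$ is proper (so $R(x)x^{-1/2}=O(x^{-3/2})$ at $+\infty$, while $x^{-1/2}$ is integrable near $0$). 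It then remains to apply Theorem~\ref{Th_Real_axis} to $R_0$: its first formula with $m=1$ gives $\int_{\mathbb{R}}R_0(t)\,dt=\pi\sum_{k=1}^{2n}R_0(x_k)/\mu_{*}(x_k)$, and formula~(\ref{Th1_Main_L_m_Real_axis}) applied to $R_0$ (of degree $2n$) gives $\|R_0\|_{L^{2m}(\mathbb{R})}^{2m}=\frac{\pi}{m}\sum_{k=1}^{2mn}|R_0(x_k)|^{2m}/\mu_{*}(x_k)$; substituting $R_0(x_k)=R(x_k^2)$ yields the two identities~(\ref{Quadrature_Semiaxis}). The assertion about a notch at infinity when $\varphi\equiv0$ is inherited verbatim from Theorem~\ref{Th_Real_axis}, and causes no difficulty since $R_0(\infty)=0$.

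The whole argument is essentially bookkeeping, so there is no serious obstacle; the one place that requires attention is the identification of the $\mathbb{C}^{+}$-poles of ${\mathcal R}_{\mathbb{R}^+}$ together with their multiplicities --- in particular making sure that coincidences of the form $z_k=-\overline{z_j}$ (which occur exactly when $r_j=r_k$ and $\varphi_j+\varphi_k=2\pi$, together with the self-coincidence $\varphi_k=\pi$) are harmless. They are, because Theorem~\ref{Th_Real_axis} operates with the genuine distinct poles of ${\mathcal R}_{*}$ and their true multiplicities, while the products and sums defining $B_{*}$ and $\mu_{*}$ in the statement are insensitive to merging the corresponding factors, as one checks directly.
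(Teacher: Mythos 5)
Your proposal is correct and follows essentially the same route as the paper: reduce to Theorem~\ref{Th_Real_axis} via the substitution $x=\tilde{x}^2$ applied to $R_0(z)=R(z^2)$, identifying the resulting $B_{*}$ and $\mu_{*}$ with those in the statement. The extra bookkeeping you carry out (pole identification in $\mathbb{C}^{+}$, multiplicities, possible coincidences, convergence at $0$ and $\infty$) is exactly what the paper leaves implicit, and your direct application of the $L^{2m}$ formula to $R_0$ of degree $2n$ is equivalent to the paper's device of replacing $R$ by $R^m$.
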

\begin{proof} Let $m=1$.
We now substitute $x=\tilde{x}^2$ in the integrals in~(\ref{Quadrature_Semiaxis}). Then, if
$R_0(\tilde{x})=R(\tilde{x}^2)$,
\begin{equation*}
\label{ZAMENA-R+} \int_{\mathbb{R}^+}\frac{R(x)}{\sqrt{x}}dx=
\int_{\mathbb{R}}R_0(\tilde{x})d\tilde{x},\qquad
\int_{\mathbb{R}^+}\frac{|R(x)|^2}{\sqrt{x}}dx=
\int_{\mathbb{R}}|R_0(\tilde{x})|^2d\tilde{x}.
\end{equation*}
Thus we get the required formulas~(\ref{Quadrature_Semiaxis}) by applying Theorem~\ref{Th_Real_axis} with the exchange of $R$ for $R_0$ in the integrals over~$\mathbb{R}$. The general case $m\in {\mathbb N}$ follows if we exchange $R$ for $R^m$.
\end{proof}

\begin{remark}  One can consider the weight $\sqrt{x}$ instead of $1/\sqrt{x}$. Indeed, in this case, for example, for $m=1$ by substituting $x=\tilde{x}^2$ we get
$$
\int_{\mathbb{R}^+}R(x)\sqrt{x}dx=\int_{\mathbb{R}}R_1(\tilde{x})d\tilde{x},\qquad
R_1(\tilde{x})=\tilde{x}^2R_0(\tilde{x})=\tilde{x}^2R(\tilde{x}^2).
$$
Furthermore, as in Theorem~\ref{Th_Real_semiaxis}, applying
Theorem~\ref{Th_Real_axis} to the function $R_1$ (of degree $2n$)
gives the following formulas  (if the integrals converge):
$$
\int_{\mathbb{R}^+}R(x)\sqrt{x}dx=\pi
\sum_{k=1}^{2n}\frac{x_k^2R(x_k^2)}{\mu_*(x_k)},\quad
\int_{\mathbb{R}^+}|R(x)|^2\sqrt{x}dx=\pi
\sum_{k=1}^{2n}\frac{x_k^2|R(x_k^2)|^2}{\mu_*(x_k)},
$$
where $x_k$ are the same notches as in (\ref{Quadrature_Semiaxis}) with $m=1$.
\end{remark}

\subsection{The case of the interval $[-1,1]$}
In this section we prove an analogue of \cite[Theorem 3]{DanSem2016} with the same refinements as we mentioned in the cases of the circle and the real axis. The proof is easily reduced to the case of the circle by the Zhukovsky substitution and hence we only give brief arguments. Set
\begin{equation*}
\label{weight_segment} I:=[-1,1],\qquad
\omega(x):=\frac{1}{\sqrt{1-x^2}}.
\end{equation*}
\begin{theorem}
\label{Segment_Th_Real_axis}
Let $R(z)$ be a rational function of degree~$n$ whose poles do not belong to the segment~$I$. Put
\begin{equation*}
\label{Segment_mathcal_R_1} {\mathcal
R}(z):=R_1(z)\cdot\overline{R_1({1/\overline{z}})},\quad
R_1(z):=R(\tfrac{1}{2}(z+1/z)).
\end{equation*}
We denote by $z_k$, $k=1,\ldots,\nu$, all pairwise distinct poles of the function ${\mathcal R}$, which lie in the disc~$|z|<1$, and by $n_k$ their multiplicities. Set
\begin{equation*}
\label{Segment_Blaschke_Product}
B_0(z)=\prod_{k=1}^{\nu}\left(\frac{z-z_{k}}{1-z\overline{z_{k}}}
\right)^{n_k},
\qquad\mu_0(z)=\sum_{k=1}^{\nu}\frac{n_k(1-|z_k|^2)}{|z-z_k|^2},
\qquad\sum_{k=1}^{\nu} n_k=2n.
\end{equation*}

Then for any $\varphi\in \mathbb{R}$ we have
\begin{equation}
\label{Segment_Int_formula}
 \int_I R(x)\omega(x)
\,dx=\pi\sum_{k=1}^{2n+1}\frac{ R\left(x_k(\varphi)\right)}
{\mu_{0}(\zeta_k(\varphi))+1},\qquad
x_k=\frac{1}{2}\left(\zeta_k+\frac{1}{\zeta_k}\right)\in [-1,1],
\end{equation}
where the notches $\zeta_k=\zeta_k(\varphi)$ are the roots of the equation~$\zeta
B_{0}(\zeta)=e^{i\varphi}$;
\begin{equation}
\label{Segment_L_m} \|R\|_{L^{2m}(I;\,\omega)}^{2m}:=\int_I
|R(x)|^{2m}\omega(x) \,dx=\pi\sum_{k=1}^{2nm+1}
\frac{|R(x_k(m,\varphi))|^{2m}}{m\mu_{0}(\zeta_k(m,\varphi))+1},\qquad
m\in {\mathbb N},
\end{equation}
where all $x_k=x_k(m,\varphi)$ depend on the notches $\zeta_k(m,\varphi)$ as in $(\ref{Segment_Int_formula})$ while $\zeta_k(m,\varphi)$ are the roots of the equation $\zeta B_{0}^{m}(\zeta)=e^{i\varphi}$.
 \end{theorem}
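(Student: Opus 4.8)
The plan is to transport everything to the unit circle $\gamma_1=\{|z|=1\}$ by the Zhukovsky map $\psi(z)=\tfrac12(z+1/z)$ and then invoke the already established circle case, Theorem~\ref{Th-1+} with $r=1$. Since $\psi$ maps $\gamma_1$ onto $I$ by $e^{i\theta}\mapsto\cos\theta$, so that $R_1(e^{i\theta})=R(\cos\theta)$, the first step I would carry out is the substitution identity: parametrizing $\gamma_1$ by $\zeta=e^{i\theta}$ (hence $|d\zeta|=d\theta$) and then putting $x=\cos\theta$ gives
\[
\int_I R(x)\,\omega(x)\,dx=\int_0^{\pi}R(\cos\theta)\,d\theta=\tfrac12\int_{\gamma_1}R_1(\zeta)\,|d\zeta|,
\]
and, since $|R(\cos\theta)|^{2m}$ is even in $\theta$,
\[
\int_I |R(x)|^{2m}\,\omega(x)\,dx=\tfrac12\int_{\gamma_1}|R_1(\zeta)|^{2m}\,|d\zeta|=\tfrac12\,\|R_1\|_{L^{2m}(\gamma_1)}^{2m}.
\]

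Second, I would verify that $R_1$ fits the hypotheses of Theorem~\ref{Th-1+} on $\gamma_1$ and that the auxiliary objects of the present statement are exactly those that theorem attaches to $R_1$ and $r=1$. Writing $R=P/Q$ in lowest terms with $\max(\deg P,\deg Q)=n$ and setting $w=w(z)=\tfrac{z^2+1}{2z}$ (so $R_1(z)=R(w)$), one clears denominators by multiplying $P(w)$ and $Q(w)$ by $(2z)^n$, which represents $R_1$ as $\widetilde P/\widetilde Q$ with $\widetilde P(z)=(2z)^nP(w)$ and $\widetilde Q(z)=(2z)^nQ(w)$ polynomials of degree at most $2n$. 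A short computation shows that $\widetilde P(0)$ and $\widetilde Q(0)$ are the leading coefficients of $P$ and $Q$, hence not both zero, so that $\max(\deg\widetilde P,\deg\widetilde Q)=2n$; and $\widetilde P,\widetilde Q$ are coprime, since a common zero $z_0\ne0$ would force $P$ and $Q$ to vanish at the finite point $w(z_0)$, against coprimality. Therefore $\deg R_1=2n$, and since $\psi(\gamma_1)=I$ contains no pole of $R$, no point of $\gamma_1$ is a pole of $R_1$. It is then immediate that ${\mathcal R}(z)=R_1(z)\overline{R_1(1/\overline{z})}$, the poles $z_k$ of ${\mathcal R}$ in $|z|<1$ of multiplicity $n_k$ (so $\sum_k n_k=\deg R_1=2n$), the Blaschke product $B_0$, the function $\mu_0$, and the notch equation $\zeta B_0^{m}(\zeta)=e^{i\varphi}$ together with its $2mn+1$ roots on $\gamma_1$ are precisely the objects Theorem~\ref{Th-1+} produces for $R_1$ and $r=1$.

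Third, I would apply Theorem~\ref{Th-1+} to $R_1$ on $\gamma_1$: its first formula gives $\int_{\gamma_1}R_1(\zeta)\,|d\zeta|=2\pi\sum_{k=1}^{2n+1}R_1(\zeta_k)/(\mu_0(\zeta_k)+1)$ with $\zeta_k=\zeta_k(1,\varphi)$, and formula~(\ref{Th1_Main_L_m}) gives $\|R_1\|_{L^{2m}(\gamma_1)}^{2m}=2\pi\sum_{k=1}^{2mn+1}|R_1(\zeta_k)|^{2m}/(m\mu_0(\zeta_k)+1)$ with $\zeta_k=\zeta_k(m,\varphi)$. Substituting these into the two identities of the first step (the factor $\tfrac12$ cancelling the leading $2$) and using $R_1(\zeta_k)=R(x_k)$ with $x_k=\tfrac12(\zeta_k+1/\zeta_k)\in I$ yields exactly~(\ref{Segment_Int_formula}) and~(\ref{Segment_L_m}), the assertion for $\varphi\equiv 0\pmod{2\pi}$ being checked directly on the notch equation. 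I do not expect a real obstacle here, as the argument is a reduction; the only point requiring care is the bookkeeping of the second step, namely confirming $\deg R_1=2n$ (equivalently $\sum_k n_k=2n$) together with the identification of $B_0$, $\mu_0$ and the notch equation with the data of Theorem~\ref{Th-1+} applied to $R_1$. Once this is in place the conclusion is immediate, and no further $\varepsilon$-separation of poles is needed because it has already been carried out inside the proof of Theorem~\ref{Th-1+}.
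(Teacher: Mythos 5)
Your proposal is correct and follows essentially the same route as the paper: reduce to the unit circle via the Zhukovsky substitution $x=\tfrac12(\zeta+1/\zeta)$, note $\int_I R\,\omega\,dx=\tfrac12\int_{\gamma_1}R_1|d\zeta|$ and $\|R\|_{L^{2m}(I;\omega)}^{2m}=\tfrac12\|R_1\|_{L^{2m}(\gamma_1)}^{2m}$, and then apply the circle-case Theorem~\ref{Th-1+} (with $r=1$) to $R_1$, whose degree is $2n$. Your additional bookkeeping (coprimality of $\widetilde P,\widetilde Q$, $\deg R_1=2n$, absence of poles of $R_1$ on $\gamma_1$) only makes explicit what the paper leaves as "brief arguments."
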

\begin{proof}
We derive the formulas (\ref{Segment_Int_formula}) and (\ref{Segment_L_m})
in the same way as in Theorem~\ref{Th-1+} taking into account that the substitution $x=\tfrac{1}{2}(\zeta+1/\zeta)$ gives (see, for example, \cite{DanSem2016}):
\begin{equation*}
\label{ZAMENA+++} \int_I R(x)\omega(x) \,dx=
\frac{1}{2}\int_{\gamma_1}R_1(\zeta)|d\zeta|,\qquad
\|R\|_{L^{2m}(I;\,\omega)}^{2m}=
\frac{1}{2}\|R_1\|_{L^{2m}(\gamma_1)}^{2m}.
\end{equation*}
\end{proof}

\section{Quadrature formulas and inequalities in the case of the circle $\gamma_r$}

\subsection{Pointwise estimates of Jackson-Nikolskii type for rational functions}
The following result is a corollary of the quadrature formula~(\ref{Th1_Main_L_m}).
\begin{theorem}
\label{Th_Nikolski_Circle_poinwise} Let $m\in \mathbb{N}$ and let $R(z)$
be a rational function of degree~$n$ whose poles do not belong to the circle~$\gamma_r$. Then the following inequality holds:
\begin{equation}
\label{Main_Inequality_Circle}
 \frac{|R(\zeta)|^{2m}}{m\mu(\zeta)+1}\le
\frac{1}{2\pi r}\|R\|_{L^{2m}(\gamma_r)}^{2m}\qquad \forall \zeta\in
\gamma_r,
\end{equation}
where $\mu$ is defined in $(\ref{Blaschke_Product})$. This inequality is sharp for  $m=1$: for any $\zeta_0\in\gamma_r$ there exists a rational function $R^{\star}$ such that $(\ref{Main_Inequality_Circle})$ becomes an equality if
$\zeta=\zeta_0$.
\end{theorem}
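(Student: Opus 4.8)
The inequality itself is an immediate consequence of the quadrature formula~(\ref{Th1_Main_L_m}). Indeed, fix $\zeta_0\in\gamma_r$ and choose $\varphi$ so that $\zeta_0$ is one of the notches $\zeta_k(m,\varphi)$; this is possible because, as explained in the discussion preceding Theorem~\ref{DanSem_Th1}, as $\varphi$ runs over $\mathbb{R}$ each notch sweeps out the whole circle $\gamma_r$, so for every prescribed point of $\gamma_r$ there is a value of $\varphi$ putting a notch there. With this choice, all summands on the right-hand side of~(\ref{Th1_Main_L_m}) are nonnegative, so dropping every term except the one corresponding to $\zeta_0$ gives $\|R\|_{L^{2m}(\gamma_r)}^{2m}\ge 2\pi r\,|R(\zeta_0)|^{2m}/(m\mu(\zeta_0)+1)$, which is exactly~(\ref{Main_Inequality_Circle}) at $\zeta=\zeta_0$. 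Since $\zeta_0$ was arbitrary, the inequality holds for all $\zeta\in\gamma_r$.

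For the sharpness claim with $m=1$, I would construct $R^{\star}$ so that in the quadrature formula~(\ref{Th1_Main_L_m}) (with $m=1$) the term at $\zeta_0$ is the \emph{only} nonzero one, i.e.\ $R^{\star}(\zeta_k)=0$ for all the other $n$ notches $\zeta_k=\zeta_k(1,\varphi)$, $\zeta_k\ne\zeta_0$, where $\varphi$ is again chosen so that $\zeta_0$ is a notch. Given the prescribed pole structure encoded by $B$ and $\mu$ in~(\ref{Blaschke_Product}), a natural candidate is
$$
R^{\star}(z)=\frac{1}{\zeta_0}\,\frac{zB(z)-re^{i\varphi}}{z-\zeta_0},
$$
a rational function whose numerator $zB(z)-re^{i\varphi}$ vanishes precisely at the $n+1$ notches and is divided by $(z-\zeta_0)$ to remove the zero at $\zeta_0$; its denominator is that of $zB(z)$, so its poles sit at the $z_k$ (and possibly at infinity), matching the admissible configuration, and $\deg R^{\star}=n$. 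Then $R^{\star}(\zeta_k)=0$ for $k$ with $\zeta_k\ne\zeta_0$, so~(\ref{Th1_Main_L_m}) collapses to $\|R^{\star}\|_{L^2(\gamma_r)}^2=2\pi r\,|R^{\star}(\zeta_0)|^2/(\mu(\zeta_0)+1)$, i.e.\ equality in~(\ref{Main_Inequality_Circle}) at $\zeta=\zeta_0$.

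The main point to verify carefully is that this $R^{\star}$ is genuinely admissible and that $R^{\star}(\zeta_0)\ne 0$, so that the equality is not the trivial $0=0$. For the latter one computes $R^{\star}(\zeta_0)=\lim_{z\to\zeta_0}\frac{1}{\zeta_0}\frac{zB(z)-re^{i\varphi}}{z-\zeta_0}=\frac{1}{\zeta_0}\frac{d}{dz}\big(zB(z)\big)\big|_{z=\zeta_0}$, and since $\zeta_0 B(\zeta_0)=re^{i\varphi}$ one gets $(zB)'(\zeta_0)=B(\zeta_0)(1+\zeta_0 B'(\zeta_0)/B(\zeta_0))=B(\zeta_0)(1+\mu(\zeta_0))\ne 0$ because $B$ is nonvanishing on $\gamma_r$ and $\mu>0$ there; hence $R^{\star}(\zeta_0)=(re^{i\varphi}/\zeta_0^2)(1+\mu(\zeta_0))\ne 0$. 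The genuine obstacle, and the step deserving the most care, is matching pole multiplicities: the $z_k$ defined in~(\ref{Blaschke_Product}) are the poles of $\mathcal{R}=R\overline{R(r^2/\bar z)}$ with their multiplicities $n_k$, and one must check that $zB(z)-re^{i\varphi}$ has no spurious cancellation with the denominator of $zB$, that removing the factor $(z-\zeta_0)$ leaves exactly the prescribed poles, and that $\mathcal{R}^{\star}$ built from $R^{\star}$ reproduces the same $B$ and $\mu$ — equivalently that the quadrature identity~(\ref{Th1_Main_L_m}) is being applied with the correct $\mu$. This is where I would slow down and write out the degree and multiplicity bookkeeping in detail; everything else is a one-line consequence of the nonnegativity of the quadrature weights.
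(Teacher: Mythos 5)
Your argument is exactly the paper's: the inequality is obtained by choosing $\varphi$ so that one notch of the quadrature formula~(\ref{Th1_Main_L_m}) sits at the prescribed point and discarding the remaining nonnegative summands, and the extremal function you propose is (up to an inessential constant factor) the same $R^{\star}(z)=\bigl(zB(z)-re^{i\varphi}\bigr)/(z-\zeta_1)$ used in Example~\ref{example1}. Your explicit verification that $R^{\star}(\zeta_0)=(re^{i\varphi}/\zeta_0^2)\bigl(1+\mu(\zeta_0)\bigr)\neq 0$ and your flagged multiplicity bookkeeping are sound additions to details the paper only asserts.
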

\begin{proof}
Each point  $\zeta_k(m,\varphi)$ in the formula~(\ref{Th1_Main_L_m})
runs around the circle~$\gamma_r$ continuously if the parameter $\varphi$ increases from $0$ up to  $2\pi (mn+1)$ continuously.
Therefore for any given $\zeta\in \gamma_r$ one can choose $\varphi$ so that $\zeta_1(m,\varphi)=\zeta$. So if we keep just the first summand in the sum (\ref{Th1_Main_L_m}) that contains only positive terms, then we get~(\ref{Main_Inequality_Circle}).
\end{proof}
The following example shows the sharpness of (\ref{Main_Inequality_Circle}) for $m=1$.
\begin{example}
 \label{example1}
Let
$$
R^{\star}(z)=R^{\star}(n,\varphi;z)=\frac{zB(z)-re^{i
\varphi}}{z-\zeta_1}=C\frac{\prod_{k=2}^{n+1}(z-\zeta_k)}{\prod_{k}(r^2-z\overline{z_{k}})^{n_k}},\qquad
\zeta_k=\zeta_k(1,\varphi),
$$
where $B$ is the Blaschke product from (\ref{Blaschke_Product}) and
$\varphi\in {\mathbb R}$. The function $R^{\star}$ is of degree $n$ and satisfies the assumptions of Theorem~\ref{Th-1+}.
 Consequently, the equality (\ref{Th1_Main_L_m}) holds for it. From the other side, for all
 the notches different from $\zeta_1$ we have
$R^{\star}(\zeta_k)=0$, where $k=2,\ldots,n+1$. By this reason the inequality (\ref{Main_Inequality_Circle})
becomes an equality at the point
$\zeta=\zeta_1$. What is more, the function
$|R^{\star}(\zeta)|^{2}/(\mu(\zeta)+1)$ reaches its maximal value on $\gamma_r$ at this point. Hence if in addition $\varphi$ is chosen so that $\mu(\zeta_1(1,\varphi))=\|\mu\|_{L^{\infty}(\gamma_r)}$, then
\begin{equation}
\label{EXTREMA-R}
 \|R^{\star}\|_{L^{2}(\gamma_r)}^{2}= 2\pi
r
\frac{\|R^{\star}\|_{L^{\infty}(\gamma_r)}^{2}}
 {\|\mu\|_{L^{\infty}(\gamma_r)}+1}.
\end{equation}

Another example of an extremal rational function for (\ref{Main_Inequality_Circle}) was constructed in~\cite[Section~3.1.1]{DanSem2016}.
\end{example}

\begin{remark}
Note that the estimate $(\ref{Main_Inequality_Circle})$ refines a similar result from~\cite[Theorem 5]{DanSem2016}, where the weight $\mu$
(see (\ref{Blaschke_Product})) contains the maximal multiplicity $\max\{n_k\}$ instead of the individual multiplicities~$n_k$.
\end{remark}
\begin{remark}
For each $\zeta\in\gamma_r$ the inequality (\ref{Main_Inequality_Circle}) obviously yields the following alternative (cf.
Theorem~\ref{Theorem_Alternative_Real_axis}):
\begin{equation*}
\label{Theorem_Alternative_Circle_1}
 |R(\zeta)| <\frac{m\mu(\zeta)+1}{2\pi r}\qquad
\text{or} \qquad |R(\zeta)|^{2m-1}\le
 \|R\|_{L^{2m}(\gamma_r)}^{2m}.
\end{equation*}
In particular, for $m=1$ we get
$$
|R(\zeta)|\le \max\left\{\frac{\mu(\zeta)+1}{2\pi
r},\,\|R\|_{L^{2}(\gamma_r)}^{2}\right\}.
$$
\end{remark}

\subsection{$(q,p)$-inequalities of Jackson-Nikolskii type for rational functions}
\label{L-p-q+++}

Another corollary of Theorem~\ref{Th_Nikolski_Circle_poinwise} is
\begin{theorem}
\label{theorem3.2}
Under the assumptions of Theorem~$\ref{Th_Nikolski_Circle_poinwise}$, the following
$(q,p)$-inequality of Jackson-Nikolskii type holds:
\begin{equation}
\label{Nikolsky_ineq_circle} \|R\|_{L^q(\gamma_r)}\le
\left(\frac{m_p\,\|\mu\|_{L^\infty(\gamma_r)}+1}{2\pi
r}\right)^{\frac{1}{p}-\frac{1}{q}}\|R\|_{L^p(\gamma_r)}, \qquad
0<p<q\le\infty,
\end{equation}
where  $m_p\in\mathbb{N}\cap[\tfrac{p}{2};1+\tfrac{p}{2})$. Moreover, if all the poles of $R$ do not belong to the annulus
$$
A_\delta:=\left\{z:\delta r <|z|< \delta^{-1}r\right\},\qquad
\delta\in (0,1),
$$
then for $0<p<q\le\infty$ we have
\begin{equation}
\label{Nikolsky_ineq_circle_delta} \|R\|_{L^q(\gamma_r)}\le
\left(\frac{1}{2\pi
r}\right)^{\frac{1}{p}-\frac{1}{q}}
\left(m_p\,n\cdot\frac{1+\delta}{1-\delta}+1
\right)^{\frac{1}{p}-\frac{1}{q}}\|R\|_{L^p(\gamma_r)}.
\end{equation}
The inequalities $(\ref{Nikolsky_ineq_circle})$ and
$(\ref{Nikolsky_ineq_circle_delta})$ are sharp for $(q,p)=(\infty,2)$.
\end{theorem}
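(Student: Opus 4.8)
The plan is to derive both inequalities directly from the pointwise estimate \eqref{Main_Inequality_Circle} of Theorem~\ref{Th_Nikolski_Circle_poinwise}, using a standard interpolation-of-metrics argument. First I would reduce to the case $q=\infty$. Given $0<p<q<\infty$, choose $m=m_p\in\mathbb N\cap[\tfrac p2;1+\tfrac p2)$ so that $2m\ge p$; this is the natural integer for which \eqref{Main_Inequality_Circle} applies and $2m$ is closest to $p$ from above. From \eqref{Main_Inequality_Circle}, for every $\zeta\in\gamma_r$ we get
\begin{equation*}
|R(\zeta)|^{2m}\le\frac{m\mu(\zeta)+1}{2\pi r}\,\|R\|_{L^{2m}(\gamma_r)}^{2m}
\le\frac{m_p\,\|\mu\|_{L^\infty(\gamma_r)}+1}{2\pi r}\,\|R\|_{L^{2m}(\gamma_r)}^{2m},
\end{equation*}
hence $\|R\|_{L^\infty(\gamma_r)}^{2m}\le K\,\|R\|_{L^{2m}(\gamma_r)}^{2m}$ with $K:=(m_p\|\mu\|_{L^\infty}+1)/(2\pi r)$. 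This is the $(\infty,2m)$-inequality in the sharp form we need.

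Next I would pass from $(\infty,2m)$ to general $(q,p)$ by the usual Hölder/log-convexity trick. For $p\le t<\infty$ one has $\|R\|_{L^t}^t=\int_{\gamma_r}|R|^{t-p}|R|^p\,|d\zeta|\le\|R\|_{L^\infty}^{t-p}\|R\|_{L^p}^p$. Taking $t=2m$ gives, after combining with the displayed bound, $\|R\|_{L^\infty}^{2m}\le K\|R\|_{L^\infty}^{2m-p}\|R\|_{L^p}^p$, i.e. $\|R\|_{L^\infty}^{p}\le K\|R\|_{L^p}^p$, which is precisely \eqref{Nikolsky_ineq_circle} with $q=\infty$ after taking $p$-th roots. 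For finite $q$ with $p<q$ one interpolates once more: $\|R\|_{L^q}^q\le\|R\|_{L^\infty}^{q-p}\|R\|_{L^p}^p\le K^{(q-p)/p}\|R\|_{L^p}^{q-p}\cdot\|R\|_{L^p}^p=K^{(q-p)/p}\|R\|_{L^p}^q$, and taking $q$-th roots yields $\|R\|_{L^q}\le K^{1/p-1/q}\|R\|_{L^p}$, which is \eqref{Nikolsky_ineq_circle}. The case $p\ge q$ does not arise since $0<p<q\le\infty$ by hypothesis.

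For \eqref{Nikolsky_ineq_circle_delta} I would estimate $\|\mu\|_{L^\infty(\gamma_r)}$ under the hypothesis that no pole $z_k$ of $\mathcal R$ lies in the annulus $A_\delta$. Since $\mathcal R(z)=R(z)\overline{R(r^2/\overline z)}$, its poles in $|z|<r$ are the $z_k$ with $|z_k|\le\delta r$, and for $\zeta\in\gamma_r$, $|z_k|\le\delta r$, the elementary bound
\begin{equation*}
\frac{r^2-|z_k|^2}{|\zeta-z_k|^2}\le\frac{r^2-|z_k|^2}{(r-|z_k|)^2}=\frac{r+|z_k|}{r-|z_k|}\le\frac{1+\delta}{1-\delta}
\end{equation*}
holds; summing with weights $n_k$ and using $\sum n_k\le 2n$ from \eqref{Blaschke_Product}, wait — more carefully, $\sum_{k=1}^\nu n_k = n$ by \eqref{Blaschke_Product} (the degree of $R$ itself), so $\mu(\zeta)\le n(1+\delta)/(1-\delta)$ for all $\zeta\in\gamma_r$, giving $m_p\|\mu\|_{L^\infty}+1\le m_p n(1+\delta)/(1-\delta)+1$; substituting into \eqref{Nikolsky_ineq_circle} yields \eqref{Nikolsky_ineq_circle_delta}. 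Finally, sharpness for $(q,p)=(\infty,2)$ (so $m_p=1$) is exactly the content of the extremal identity \eqref{EXTREMA-R} in Example~\ref{example1}: choosing $\varphi$ so that $\mu(\zeta_1(1,\varphi))=\|\mu\|_{L^\infty(\gamma_r)}$, the function $R^\star$ satisfies $\|R^\star\|_{L^2(\gamma_r)}^2=2\pi r\,\|R^\star\|_{L^\infty(\gamma_r)}^2/(\|\mu\|_{L^\infty(\gamma_r)}+1)$, which is equality in \eqref{Nikolsky_ineq_circle}; and when the poles avoid $A_\delta$ one checks the annulus bound above is attained in the limit as the single relevant pole approaches $|z_k|=\delta r$ along the radius through $\zeta_1$, so \eqref{Nikolsky_ineq_circle_delta} is sharp as well. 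The only genuine subtlety is bookkeeping the relation between the poles of $\mathcal R$ and those of $R$ and confirming $\sum n_k$ equals $n$ rather than $2n$ in \eqref{Blaschke_Product}; everything else is routine interpolation.
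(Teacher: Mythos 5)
Your argument is correct and follows essentially the same route as the paper: the pointwise bound (\ref{Main_Inequality_Circle}) evaluated at a maximum point of $|R|$, the standard H\"older/interpolation trick with exponent $2m_p$, the weight estimate $\mu(\zeta)\le n\,\frac{1+\delta}{1-\delta}$ (with $\sum_k n_k=n$, as you correctly resolved), and the extremal identity (\ref{EXTREMA-R}) of Example~\ref{example1} for sharpness of (\ref{Nikolsky_ineq_circle}). The only cosmetic difference is in the sharpness of (\ref{Nikolsky_ineq_circle_delta}): since $A_\delta$ is open, the paper's Example~\ref{example2} places a single pole of multiplicity $n$ exactly at $z_1=\delta r$ and takes $\varphi=0$ so that $\zeta_1=r$, obtaining exact equality rather than equality only in the limit as in your sketch.
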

\begin{proof} Choose $\zeta$ in (\ref{Main_Inequality_Circle})
so that $|R(\zeta)|=\|R\|_{L^\infty(\gamma_r)}$. Then, exchanging
$\mu(\zeta)$ for $\|\mu\|_{L^\infty(\gamma_r)}$ in
(\ref{Main_Inequality_Circle}), we get
$$
\|R\|_{L^\infty(\gamma_r)}^{2m}\le
C_m\|R\|_{L^{2m}(\gamma_r)}^{2m},\qquad
C_m:=\frac{m\|\mu\|_{L^\infty(\gamma_r)}+1}{2\pi r}.
$$
Furthermore, we now use the well-known trick to obtain the $(q,p)$-inequality (see, e.g. \cite[Section~4.9.2]{Timan} and
\cite[Section 4]{DeVore}). For $2(m-1)< p\le 2m$
($m=m_p\in\mathbb{N}\cap[\tfrac{1}{2}p;\tfrac{1}{2}p+1)$) we have
$$
\|R\|^{2m}_{L^{2m}({\gamma_r})}=\int_{\gamma_r} |R|^{2m-p}|R|^{p}\le
\|R\|^{2m-p}_{L^{\infty}({\gamma_r})} \|R\|^{p}_{L^{p}({\gamma_r})},
$$
which, together with the latter estimate, implies that
$$
\|R\|^{2m}_{L^{\infty}({\gamma_r})}\le C_m
\|R\|^{2m-p}_{L^{\infty}({\gamma_r})}
\|R\|^{p}_{L^{p}({\gamma_r})}\quad\Rightarrow\quad
\|R\|_{L^{\infty}({\gamma_r})}\le
C_m^{1/p}\|R\|_{L^{p}({\gamma_r})}.
$$
From this for any $q\ge p$ we conclude that
$$
\|R\|_{L^q({\gamma_r})}^q=\int_{\gamma_r}
|R|^{q-p}|R|^p\le\|R\|_{L^\infty({\gamma_r})}^{q-p}\|R\|_{L^p({\gamma_r})}^{p}
$$
$$
\le
\left(C_m^{1/p}\|R\|_{L^{p}({\gamma_r})}\right)^{q-p}\|R\|_{L^p({\gamma_r})}^{p}=C_m^{(q-p)/p}\|R\|_{L^p({\gamma_r})}^{q}.
$$
Raising each side to the power $1/q$ leads to the inequality~(\ref{Nikolsky_ineq_circle}). This inequality implies~(\ref{Nikolsky_ineq_circle_delta}) if we take into account the estimate for the weight~(\ref{Blaschke_Product}):
$$
\mu(\zeta)=\sum_{k=1}^{\nu}\frac{n_k(r^2-|z_k|^2)}{|\zeta-z_k|^2}
\le \sum_{k=1}^\nu n_k\frac{r^2-(\delta r)^2}{(r-\delta r)^2}
 = n\; \frac{1+\delta}{1-\delta},\quad z_k\notin A_\delta,
 \quad |\zeta|=r.
 $$

We now give an example to show the sharpness of (\ref{Nikolsky_ineq_circle_delta}) in the case $(q,p)=(\infty,2)$. The sharpness of~$(\ref{Nikolsky_ineq_circle})$ was discussed in the previous example, see  (\ref{EXTREMA-R}).
\begin{example}
 \label{example2}
Let us prove the sharpness of $(\ref{Nikolsky_ineq_circle_delta})$ as follows. Let $B$
have the single zero $z_{1}=\delta r$ of multiplicity $n$ and let the quadrature nodes $\zeta_k$ correspond to $\varphi=0$.
Then one of the nodes, say $\zeta_1$, equals~$r$. Consequently,
$$
B(z)=\left(\frac{z-\delta r}{r-z\delta}\right)^{n},\quad
\mu(\zeta)=nr^2\frac{1-\delta^2}{|\zeta-\delta r|^2},\quad
R^{\star}(z)=\frac{zB(z)-r}{z-r}.
$$
If $\zeta=r$,  $m=1$ and $R=R^{\star}$, then the left hand side of~(\ref{Main_Inequality_Circle}) reaches its maximal value and we get the equality (see the discussion in Example~\ref{example1}).
Moreover, $\mu(\zeta)+1$ reaches its maximal value
$n\cdot\frac{1+\delta}{1-\delta}+1$ at the same point and hence we get the equality in~(\ref{Nikolsky_ineq_circle_delta}) for $(q,p)=(\infty,2)$.
\end{example}
\end{proof}
\begin{remark}
The estimate (\ref{Nikolsky_ineq_circle_delta}) refines the following result from~\cite{DanSem2016}:
$$
\|R\|_{L_{\infty}(\gamma_r)} \le \sqrt{\frac{N s}{2\pi
r}\,\frac{1+\delta}{1-\delta}}\;\|R\|_{L_2(\gamma_r)}, \qquad
z_k\notin A_{\delta},
$$
where $s$ is the maximal multiplicity of the poles of $R$ and $N=2\eta$ is the number of its pairwise distinct poles (see the definitions in Theorem~\ref{DanSem_Th1}).  In addition, $sN\ge n$ and we have the equality only in the case when all the poles are of the same multiplicity~$s$. Later on, the $(q,p)$-version of this inequality with the corresponding constant was obtained in~\cite{Platov}.

In the recent paper \cite[Theorem 2.4]{Baranov} other methods are used to establish inequalities of the form  (\ref{Nikolsky_ineq_circle_delta}) and the following constant is obtained
before $\|R\|_{L^p(\gamma_r)}$:
\begin{equation}
\label{BARANOV} \left(\frac{1}{2\pi
r}\right)^{\frac{1}{p}-\frac{1}{q}}
  \left(m_p\,n+1\right)^{\frac{1}{p}-\frac{1}{q}}
  \left(\frac{1+\delta}{1-\delta}\right)^{\frac{1}{p}-\frac{1}{q}},
  \qquad m_p\in\mathbb{N}\cap[\tfrac{p}{2};1+\tfrac{p}{2}).
\end{equation}
As for sharpness, it is shown in \cite{Baranov} that there exists a constant  $c(p,q)>0$ such that for all  $n\ge 2$ and $\delta\in
(0,1)$ the following inequality holds:
$$
\sup \frac{\|R\|_{L^q(\gamma_r)}}{\|R\|_{L^p(\gamma_r)}}\ge c(p,q)
\left(\frac{n}{1-\delta}\right)^{\frac{1}{p}-\frac{1}{q}},
$$
where the $\sup$ is taking over all rational functions of degree~$n$ with poles outside the annulus $A_{\delta}$. What is more, there is an example in \cite{Baranov} showing the \textit{asymptotic} sharpness of the constant~(\ref{BARANOV}) when
$(q,p)=(\infty,2)$ and $n\to \infty$.

We see that the constant in (\ref{Nikolsky_ineq_circle_delta}) is better than
in (\ref{BARANOV}). Moreover, our example shows that our inequality is sharp for $(q,p)=(\infty,2)$.
\end{remark}

\subsection{Jackson-Nikolskii type inequalities for polynomials}
The study of inequalities between various metrics (inequalities of Jackson-Nikolskii type) for algebraic and trigonometric polynomials was initiated by Jackson~\cite{Jackson} and
Nikolskii~\cite{Nikolskiy}. Nowadays there are many results on this topic  (see the history of this question and numerous references in  \cite{Arestov-Deykalova} and
\cite{GanzburgTikh}). In this section we obtain a number of known inequalities for polynomials as a particular case of Theorem~\ref{Nikolsky_ineq_circle}.

Consider a rational function $R_{n_1,n_2}(z)=\sum_{k=-n_1}^{n_2} \sigma_kz^k$ $(n_1,n_2\ge
0,\;\sigma_k\in\mathbb C)$. If $n_1=0$, then we get an algebraic polynomial $P_{n_2}$. Let $l=n_1+n_2$ be the sum of multiplicities of the poles of the function $R_{n_1,n_2}$ at the points $z=0$ and $z=\infty$.
As $\delta\to
0$, the inequality (\ref{Nikolsky_ineq_circle}) (and
(\ref{Nikolsky_ineq_circle_delta}) or (\ref{BARANOV})) immediately yields the inequality
\begin{equation}
\label{Nikolsky_ineq_polynomials}
\|R_{n_1,n_2}\|_{L^q(\gamma_r)}\le \left(\frac{m_p\,l+1}{2\pi
r}\right)^{\frac{1}{p}-\frac{1}{q}}\|R_{n_1,n_2}\|_{L^p(\gamma_r)}, \qquad l=n_1+n_2.
\end{equation}
For $(q,p)=(\infty,2)$ ($m_2=1$) and $n_1=0$ this inequality is sharp and the extremal polynomial is
$P_{n}(z)=(z^{n+1}-r^{n+1})/(z-r)$ (see, for example,
\cite[Inequality (29)]{DanSem2016}). The estimate~(\ref{Nikolsky_ineq_polynomials}) refines \cite[Inequality~(29)]{DanSem2016} for $m_p\ge 2$.

The problem of the sharp constant in inequalities of the form~(\ref{Nikolsky_ineq_polynomials}) for algebraic polynomials,
i.e. for $n_1=0$, seems to be unsolved for all pairs $(q,p)$ except $(\infty,2)$. Progress in this direction is recently made in~\cite{Levin-Lubinsky} and \cite{GanzburgTikh}, where the above-mentioned problem is reduced to a certain extremal problem for entire functions on the real axis~$\mathbb{R}$.

It is easily seen that for $r=1$ one can go from the rational functions $R_{n_1,n_2}$ to the trigonometric polynomials with complex coefficients and back using the change of variables  $z=e^{it}$.
Let us consider several examples of using the inequality~(\ref{Nikolsky_ineq_polynomials}) in this context.

Let  $T_{n_1,n_2}(t):=R_{n_1,n_2}(e^{it})$ be a trigonometric polynomial. Then (\ref{Nikolsky_ineq_polynomials}) for $r=1$
gives the inequality
\begin{equation}
\label{Nikolsky_ineq_trig_polynomials}
\|T_{n_1,n_2}\|_{L^q[0,2\pi]}\le \left(\frac{m_p\,
l+1}{2\pi}\right)^{\frac{1}{p}-\frac{1}{q}}
 \|T_{n_1,n_2}\|_{L^p[0,2\pi]},\qquad l=n_1+n_2.
\end{equation}
In particular, given a trigonometric polynomial with complex coefficients of the standard form $T_n:=T_{n,n}$, we have
\begin{equation}
\label{Nikolsky_ineq_trig_polynomials+} \|T_{n}\|_{L^q[0,2\pi]}\le A\, n^{\frac{1}{p}-\frac{1}{q}}
\|T_{n}\|_{L^p[0,2\pi]},\qquad A:=\left(\frac{2m_p+1/n}{2\pi}\right)^{\frac{1}{p}-\frac{1}{q}}.
\end{equation}
The inequality (\ref{Nikolsky_ineq_trig_polynomials+}) is well-known; other methods to prove it can be found in the monographs \cite[Section~4.9.2]{Timan} and \cite[Section 4,
Theorem~2.6]{DeVore}. It is sharp for $(q,p)=(\infty,2)$, $m_2=1$. Absolute upper estimates of $A$ (independent of $p$,
$q$ and $n$) were considered in a number of papers. The best estimate $A\le 1.08$  known to us was obtained in  \cite{GanzburgTikh}. Other inequalities of the form (\ref{Nikolsky_ineq_trig_polynomials+}),
including the ones with more precise constants for some $(q,p)$, were obtained
by different methods, for example, in~\cite[Corollary
6.4]{Ditzian-Tikhonov}, \cite[Remark 6.3]{Ditzian-Prymak-2010} and
\cite[Corollary 4]{Mamedkhanov}.

There exist estimates of the constant in~(\ref{Nikolsky_ineq_trig_polynomials+}) with $(q,p)=(\infty,1)$
(see the results and references on this topic in~\cite{Gorbachev}). The classical estimates belong to Stechkin and Taykov who showed that the precise multiplier before $n$ is of the form $C/\pi+o(1)$ with some $C\in(0.53;0.59)$ as $n\to \infty$. Gorbachev \cite{Gorbachev} obtained some refinements of these results.
It is interesting to note that \cite[Section 3.1.2]{DanSem2016} contains the following inequality for
\textit{real non-negative} trigonometric polynomials:
$$
\|T_n\|_{L^\infty[0,2\pi]}\le  \frac{n+1}{2\pi}\,
\|T_n\|_{L^1[0,2\pi]}.
$$
One can use the estimate (\ref{Nikolsky_ineq_trig_polynomials}) even in the case of trigonometric polynomials of a special form. For example,
\begin{equation}
\label{Nikolsky_ineq_trig_polynomials+++}
\|T_{n+1,n}\|_{L^q[0,2\pi]}\le
\left(\frac{2m_p\,n+m_p+1}{2\pi}\right)^{\frac{1}{p}-\frac{1}{q}}
 \|T_{n+1,n}\|_{L^p[0,2\pi]}.
\end{equation}
For $(q,p)=(\infty,2)$, as shown in~\cite[Section 3.1.2]{DanSem2016},
the inequality (\ref{Nikolsky_ineq_trig_polynomials+++}) is sharp and the extremal polynomial is
$$
T^{\star}_{n+1,n}(t)=1+2\sum_{k=1}^n\cos kt+e^{it(n+1)}.
$$

To finish, we want to mention that there are many papers where the inequalities of Jackson-Nikolskii type for polynomials are extended to various classes of weights and bounded domains. The overview of these results and the corresponding references can be found, for example, in \cite{Ditzian-Prymak-2010,Ditzian-Prymak,Mamedkhanov,Arestov-Deykalova}. We also cite some of these results in the forthcoming sections of this paper.

\section{Quadrature formulas and inequalities in the case of the real axis $\mathbb{R}$}

\subsection{Pointwise inequalities of Jackson-Nikolskii type for rational functions}

The following result is a corollary of the quadrature formula~(\ref{Th1_Main_L_m_Real_axis}).
\begin{theorem}
\label{Th_Nikolski_Real_axis_poinwise}
Let $R(z)$ be a proper rational fraction of degree $n$ whose poles do not belong to the real axis  $\mathbb{R}$. Then the following inequality holds:
\begin{equation}
\label{Main_Inequality_Real_axis}
 \frac{|R(x)|^{2m}}{\mu_{*}(x)}\le
\frac{m}{\pi}\|R\|_{L^{2m}(\mathbb{R})}^{2m}\qquad \forall x\in
\mathbb{R},\qquad m\in \mathbb{N},
\end{equation}
where $\mu_{*}$ is defined in $(\ref{Blaschke_Product_Real_Axis})$.
This inequality is sharp for $m=1$ and $m=2$:  there exist a rational function $R^{\star}$ and $x=x_0$ such that
$(\ref{Main_Inequality_Real_axis})$ becomes an equality.
\end{theorem}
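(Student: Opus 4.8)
The inequality \eqref{Main_Inequality_Real_axis} is obtained exactly in the spirit of the proof of Theorem~\ref{Th_Nikolski_Circle_poinwise}. First I would invoke the quadrature formula \eqref{Th1_Main_L_m_Real_axis}: for every $\varphi\in\mathbb{R}$,
$$
\|R\|_{L^{2m}(\mathbb{R})}^{2m}=\frac{\pi}{m}\sum_{k=1}^{mn}\frac{|R(x_k(m,\varphi))|^{2m}}{\mu_{*}(x_k(m,\varphi))},
$$
where the notches $x_k(m,\varphi)$ are the real roots of $B_{*}^{m}(x)=e^{i\varphi}$. As recalled before Theorem~\ref{Th_Int_formula_Real_axis_DanSem}, when $\varphi$ runs continuously over $\mathbb{R}$ each notch $x_k(m,\varphi)$ traverses the whole one-point compactification $\overline{\mathbb{R}}$ continuously; hence for any prescribed $x\in\mathbb{R}$ one can select $\varphi$ with $x_1(m,\varphi)=x$. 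Since every summand in the formula is nonnegative, discarding all terms but the first yields $|R(x)|^{2m}/\mu_{*}(x)\le (m/\pi)\|R\|_{L^{2m}(\mathbb{R})}^{2m}$, which is \eqref{Main_Inequality_Real_axis}.

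It remains to establish sharpness for $m=1$ and $m=2$. For $m=1$ I would mimic Example~\ref{example1}: fix $\varphi$ and set
$$
R^{\star}(x)=\frac{B_{*}(x)-e^{i\varphi}}{x-x_1}\,,\qquad x_1=x_1(1,\varphi),
$$
which is a proper rational fraction of degree $n$ (the factor $x-x_1$ in the numerator of $B_{*}-e^{i\varphi}$ cancels, the denominator keeps degree $n$, and at infinity $B_{*}\to 1$ so that $R^{\star}(\infty)=0$). This $R^{\star}$ vanishes at every notch $x_k=x_k(1,\varphi)$ with $k\ge 2$, so in the quadrature identity only the first summand survives and \eqref{Main_Inequality_Real_axis} becomes an equality at $x_0=x_1$. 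For $m=2$ the extra freedom is that $B_{*}^{2}(x)=e^{i\varphi}$ has $2n$ notches; taking
$$
R^{\star}(x)=\frac{B_{*}^{2}(x)-e^{i\varphi}}{(x-x_1)(x-x_2)}\,,
$$
with $x_1,x_2$ two distinct notches, gives a proper rational fraction of degree $n$ (the two linear factors cancel; the denominator is $\prod_k(x-\overline{z}_k)^{2n_k}$ of degree $2n$, so after cancellation degree $2n-2$; adjusting, one keeps exactly degree $n$ by choosing the pole multiplicities appropriately, or equivalently one works with $R$ of degree $n$ whose square $R^2$ has the Blaschke structure $B_{*}^{2}$). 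All remaining notches are zeros of $R^{\star}$, so in \eqref{Th1_Main_L_m_Real_axis} with $m=2$ only the $x_1$- and $x_2$-terms remain; choosing additionally $\varphi$ so that $x_1$ is a point where $|R^{\star}|^{4}/\mu_{*}$ is maximal (and where the $x_2$-contribution is comparatively negligible, e.g. $R^{\star}(x_2)=0$ after a further degenerate choice) forces equality in \eqref{Main_Inequality_Real_axis} at $x_0=x_1$.

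The main obstacle is the bookkeeping for $m=2$: one must verify that the candidate $R^{\star}$ is genuinely a \emph{proper} rational fraction of the correct degree $n$ and that its poles are exactly the conjugate-symmetric set determining $\mu_{*}$, so that \eqref{Th1_Main_L_m_Real_axis} applies verbatim; and one must arrange the free parameter $\varphi$ so that a single notch carries \emph{all} of the $L^4$-mass. The cleanest route is to build $R^{\star}$ so that $R^{\star}$ itself (not merely $(R^{\star})^2$) vanishes at all notches except one — i.e. choose the numerator of $R^{\star}$ to be $\bigl(B_{*}(x)^{2}-e^{i\varphi}\bigr)^{1/2}$-type only when this is again rational, which happens precisely when $e^{i\varphi}$ is chosen so that $B_{*}^{2}-e^{i\varphi}$ is a perfect square of a rational function, e.g. $e^{i\varphi}=1$ giving $B_{*}^{2}-1=(B_{*}-1)(B_{*}+1)$; then selecting $R^{\star}=(B_{*}-1)/(x-x_1)$ as in the case $m=1$ simultaneously makes \eqref{Main_Inequality_Real_axis} sharp for $m=2$. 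I would present the $m=2$ sharpness through this explicit device, deferring the general discussion of optimality in $\varphi$ (the analogue of \eqref{EXTREMA-R}) to a remark.
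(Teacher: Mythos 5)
Your derivation of \eqref{Main_Inequality_Real_axis} and your $m=1$ extremal $R^{\star}=(B_{*}(x)-e^{i\varphi})/(x-x_1)$ coincide with the paper's own argument (its Example~\ref{example3}). The genuine gap is the sharpness claim for $m=2$. To get equality at a finite point $x_0$ you must pick $\varphi$ with $x_1(2,\varphi)=x_0$ and then make \emph{all} remaining $2n-1$ terms of \eqref{Th1_Main_L_m_Real_axis} vanish; since every term is nonnegative, this forces $R$ to vanish at each of the other finite notches, while at most one notch (the one at infinity, occurring only for $\varphi\equiv 0$) contributes zero automatically for a proper fraction. A proper fraction of degree $n$ has at most $n-1$ finite zeros, so equality for $m=2$ requires $2n-1\le (n-1)+1$, i.e.\ $n=1$. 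This count already rules out both of your candidates. Concretely, your ``cleanest route'' $R^{\star}=(B_{*}-1)/(x-x_1)$ fails: for $\varphi=0$ the $m=2$ notches are the roots of $B_{*}^{2}=1$, i.e.\ of $B_{*}=\pm 1$, and at the $n$ notches with $B_{*}(x_k)=-1$ one has $R^{\star}(x_k)=-2/(x_k-x_1)\neq 0$, so the $L^{4}$ quadrature sum keeps $n$ strictly positive extra terms and \eqref{Main_Inequality_Real_axis} is strict for this function. Your first candidate $(B_{*}^{2}(x)-e^{i\varphi})/((x-x_1)(x-x_2))$ has degree $2n$ (so the formula with weight $\mu_*$ does not apply verbatim), retains two nonzero terms --- which can never produce equality in a one-term lower bound --- and the promised ``further degenerate choice'' with $R^{\star}(x_2)=0$ is precisely what the zero count forbids.

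The route the paper actually takes for $m=2$ (Example~\ref{example4}) is the one your plan misses, and by the above count it is essentially the only possibility: take the single-pole fraction $\rho(z)=1/(z-y_0i)$ of \eqref{rho_wave}, so $n=1$. For $\varphi=0$ the two notches are $x=0$ and $x=\infty$; the term at infinity vanishes because $|\rho|^{4}$ decays like $x^{-4}$ while $\mu_{*}$ decays only like $x^{-2}$, so the quadrature identity reduces to the single term at $x=0$ and \eqref{Main_Inequality_Real_axis} becomes an equality at $x_0=0$ (the normalization $y_0(4)=\tfrac12\sqrt[3]{4\pi}$, $\|\rho\|_{L^{4}(\mathbb{R})}=1$, merely makes the constants explicit; the paper verifies the equality through these values). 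So your proof of the inequality and your $m=1$ example stand, but the $m=2$ sharpness argument must be replaced by this one-pole example.
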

\begin{proof}
Each point  $x_k(m,\varphi)$ in the formula~(\ref{Th1_Main_L_m_Real_axis}) runs along $\mathbb{R}$ continuously if the parameter $\varphi$ changes continuously. Therefore, for any $x\in \mathbb{R}$ we can choose $\varphi$ so that $x_{1}(m,\varphi)=x$.
So if we keep just the first summand in the sum (\ref{Th1_Main_L_m_Real_axis}) that contains only positive terms, then we get~(\ref{Main_Inequality_Real_axis}).
\end{proof}

The sharpness of~(\ref{Main_Inequality_Real_axis}) for $m=1$ and $m=2$ is confirmed by the following examples.
\begin{example}
 \label{example3}
Argument are analogous to those in Example~\ref{example1}. Let
$$
R^{\star}(z)=R^{\star}(n,\varphi;z)=\frac{B_{*}(x)-e^{i
\varphi}}{x-x_1}=C\frac{\prod_{k=2}^{n}(x-x_k)}{\prod_{k=1}^\nu(x-\overline{z_{k}})^{n_k}},\quad
x_k=x_k(1,\varphi),\quad \varphi\in(0,2\pi),
$$
where $B_{*}$ is the Blaschke product from~(\ref{Blaschke_Product_Real_Axis}). The function $R^{\star}$ is of degree
 $n$ and satisfies the assumptions of Theorem~\ref{Th_Real_axis}, hence
the equality~(\ref{Th1_Main_L_m_Real_axis}) with $m=1$ holds for it. Furthermore, by the same calculations as in Example~\ref{example1}, for some $\varphi$ we get
\begin{equation*}
\label{EXTREMA-R++}
 \|R^{\star}\|_{L^{2}(\mathbb{R})}^{2}= \pi
\frac{\|R^{\star}\|_{L^{\infty}(\mathbb{R})}^{2}}
 {\|\mu_{*}\|_{L^{\infty}(\mathbb{R})}}.
\end{equation*}
\end{example}
\begin{example}
\label{example4} Given some $p>1$, consider the rational function
\begin{equation}
\label{rho_wave} \rho(z)={\rho}(p;z):=\frac{1}{z-y_0i}, \qquad
y_0=y_0(p):=\left(\frac{\pi
 2^{2-p}}{(p-1)\,\textrm{B}(p/2, p/2)}\right)^{\frac{1}{p-1}}>0,
\end{equation}
where $\textrm{B}$ is the Euler beta function and $y_0(p)\to 1$ as
$p\to\infty$. One can easily check (see, for example, \cite[Section 2]{Borodin2007}) that $\|{\rho}\|_{L^p(\mathbb{R})}=1$. Taking into account the obvious equalities
$$
{\rho}(0)=\|{\rho}\|_{L^\infty(\mathbb{R})}=1/y_0,\qquad
\mu_{*}(0)=\|\mu_{*}\|_{L^\infty(\mathbb{R})} =1/y_0,
$$
we rewrite the inequality (\ref{Main_Inequality_Real_axis}) for
${\rho}$, $x=0$ and $p=2m$ as follows:
$$
\left(\frac{1}{y_0(2m)}\right)^{2m-1}\le \frac{m}{\pi}.
$$
It becomes an equality in the cases $m=1$ and $m=2$ (as
$y_0(2)=\pi$ and $y_0(4)=\frac{1}{2}\sqrt[3]{4\pi}$).
\end{example}

\subsection{$(q,p)$-inequalities of Jackson-Nikolskii type for rational functions}

As a corollary of Theorem~\ref{Th_Nikolski_Real_axis_poinwise} we obtain
\begin{theorem}
Under the assumptions of Theorem~\ref{Th_Nikolski_Real_axis_poinwise}, the following
 $(q,p)$-inequality of Jackson-Nikolskii type holds:
\begin{equation}
\label{Nikolsky_ineq_Real_axis} \|R\|_{L^q(\mathbb{R})}\le
\left(\frac{m_p}{\pi}\|\mu_{*}\|_{L^\infty(\mathbb{R})}\right)^{\frac{1}{p}-\frac{1}{q}}\|R\|_{L^p(\mathbb{R})},
\qquad q>p>0,
\end{equation}
where  $m_p\in\mathbb{N}\cap[\tfrac{p}{2};1+\tfrac{p}{2})$. Moreover, if all the poles of $R$ lie outside the stripe
$$
S_\delta:=\left\{z: |\Im z|\le \delta\right\},\qquad \delta>0,
$$
then
\begin{equation}
\label{Nikolsky_ineq_Real_axis_delta} \|R\|_{L^q(\mathbb{R})}\le
\left(\frac{m_p\,n}{\pi\delta}\right)^{\frac{1}{p}-\frac{1}{q}}\|R\|_{L^p(\mathbb{R})},
\qquad q>p>0.
\end{equation}
The inequalities $(\ref{Nikolsky_ineq_Real_axis})$ and
$(\ref{Nikolsky_ineq_Real_axis_delta})$ are sharp for $(q,p)=(\infty,2)$ and $(q,p)=(\infty,4)$.
\end{theorem}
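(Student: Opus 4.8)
The plan is to mimic exactly the argument used for Theorem~\ref{theorem3.2} in the case of the circle, since the quadrature formula~(\ref{Th1_Main_L_m_Real_axis}) and the pointwise estimate~(\ref{Main_Inequality_Real_axis}) play here the same role as~(\ref{Th1_Main_L_m}) and~(\ref{Main_Inequality_Circle}) did there. First I would start from~(\ref{Main_Inequality_Real_axis}) and choose $x\in\mathbb{R}$ so that $|R(x)|=\|R\|_{L^\infty(\mathbb{R})}$ (here $q=\infty$ is the key case from which the general $q>p$ will follow); replacing $\mu_{*}(x)$ by $\|\mu_{*}\|_{L^\infty(\mathbb{R})}$ gives
$$
\|R\|_{L^\infty(\mathbb{R})}^{2m}\le \frac{m}{\pi}\,\|\mu_{*}\|_{L^\infty(\mathbb{R})}\,\|R\|_{L^{2m}(\mathbb{R})}^{2m}.
$$
Then, for $2(m-1)<p\le 2m$ with $m=m_p\in\mathbb{N}\cap[\tfrac p2;\tfrac p2+1)$, I would use the standard interpolation trick $\|R\|_{L^{2m}(\mathbb{R})}^{2m}=\int_{\mathbb{R}}|R|^{2m-p}|R|^{p}\le \|R\|_{L^\infty(\mathbb{R})}^{2m-p}\|R\|_{L^p(\mathbb{R})}^{p}$, substitute, cancel the common power of $\|R\|_{L^\infty(\mathbb{R})}$, and take the $p$-th root to obtain
$$
\|R\|_{L^\infty(\mathbb{R})}\le \left(\frac{m_p}{\pi}\|\mu_{*}\|_{L^\infty(\mathbb{R})}\right)^{1/p}\|R\|_{L^p(\mathbb{R})}.
$$
Finally, for general $q>p$ I would write $\|R\|_{L^q(\mathbb{R})}^q=\int_{\mathbb{R}}|R|^{q-p}|R|^{p}\le\|R\|_{L^\infty(\mathbb{R})}^{q-p}\|R\|_{L^p(\mathbb{R})}^{p}$, plug in the previous bound, and take the $q$-th root; this yields~(\ref{Nikolsky_ineq_Real_axis}).

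For~(\ref{Nikolsky_ineq_Real_axis_delta}) I would simply estimate the weight $\mu_{*}$ from~(\ref{Blaschke_Product_Real_Axis}) under the hypothesis that all poles lie outside the strip $S_\delta$: since $z_k\notin S_\delta$ means $\mathrm{Im}\,z_k\ge\delta$ (after the appropriate reflection putting poles in $\mathbb{C}^+$), and $|x-\overline{z}_k|\ge|\mathrm{Im}\,z_k|\ge\delta$ for real $x$, we get
$$
\mu_{*}(x)=\sum_{k=1}^{\nu}\frac{n_k\,\mathrm{Im}\,z_k}{|x-\overline z_k|^2}\le\sum_{k=1}^{\nu}\frac{n_k}{\mathrm{Im}\,z_k}\le\frac{n}{\delta},
$$
so $\|\mu_{*}\|_{L^\infty(\mathbb{R})}\le n/\delta$, and substituting into~(\ref{Nikolsky_ineq_Real_axis}) gives~(\ref{Nikolsky_ineq_Real_axis_delta}).

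For the sharpness assertions in the case $(q,p)=(\infty,2)$ I would invoke Example~\ref{example3}: the rational function $R^{\star}$ constructed there makes~(\ref{Main_Inequality_Real_axis}) an equality with $m=1$ at a point $x_0$ where moreover $\mu_{*}(x_0)=\|\mu_{*}\|_{L^\infty(\mathbb{R})}$ and $|R^{\star}(x_0)|=\|R^{\star}\|_{L^\infty(\mathbb{R})}$, which is precisely the chain of equalities needed for~(\ref{Nikolsky_ineq_Real_axis}) to be attained. For the case $(q,p)=(\infty,4)$, i.e. $m=2$, one uses that~(\ref{Main_Inequality_Real_axis}) is also sharp for $m=2$ (the explicit function $\rho=\rho(4;z)$ of Example~\ref{example4}, which satisfies $y_0(4)=\tfrac12\sqrt[3]{4\pi}$ and $\rho(0)=\|\rho\|_{L^\infty(\mathbb{R})}=\|\mu_{*}\|_{L^\infty(\mathbb{R})}=1/y_0$), so the same extremal configuration realizes equality in~(\ref{Nikolsky_ineq_Real_axis}). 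For~(\ref{Nikolsky_ineq_Real_axis_delta}) the sharpness follows by taking the pole of the extremal function exactly on the boundary of $S_\delta$ (a single pole of multiplicity $n$ at $\delta i$), so that the weight estimate $\mu_{*}(x_0)\le n/\delta$ becomes an equality simultaneously with equality in~(\ref{Main_Inequality_Real_axis}); this is the analogue of Example~\ref{example2}.

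The main obstacle is not the inequality chain — that is entirely routine given the earlier results — but the verification of sharpness: one must check that the \emph{same} function and the \emph{same} point realize equality in all three inequalities in the chain (the pointwise bound, the weight bound $\mu_{*}\le n/\delta$, and the H\"older-interpolation step $\int|R|^{q-p}|R|^p\le\|R\|_\infty^{q-p}\|R\|_p^p$). For $q=\infty$ the interpolation step is trivially an equality, so what remains is to confirm, for $m=1$ and $m=2$ respectively, that Examples~\ref{example3}–\ref{example4} provide functions attaining~(\ref{Main_Inequality_Real_axis}) at a point of maximum of both $|R|$ and $\mu_{*}$, and for the strip version that the pole can be pushed to the boundary $|\Im z|=\delta$ while preserving extremality; these are short computations of the type already carried out in Examples~\ref{example1}–\ref{example2}.
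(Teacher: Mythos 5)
Your proposal is correct and follows essentially the same route as the paper: the paper derives (\ref{Nikolsky_ineq_Real_axis}) from the pointwise bound (\ref{Main_Inequality_Real_axis}) by exactly the interpolation trick used for the circle, obtains (\ref{Nikolsky_ineq_Real_axis_delta}) from the same weight estimate $\mu_{*}(x)\le\sum_k n_k/\Im z_k\le n/\delta$, and settles sharpness by citing Examples~\ref{example3} and~\ref{example4} (the single-pole function $\rho(p;\cdot)$ of Example~\ref{example4}, whose pole lies on the boundary of the strip, covering both the $(\infty,2)$ and $(\infty,4)$ cases). Your closing discussion of which equalities must hold simultaneously is a sound elaboration of what the paper leaves implicit.
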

\begin{proof}
We obtain the estimate (\ref{Nikolsky_ineq_Real_axis}) from
(\ref{Main_Inequality_Real_axis}) in the same way as we obtained the estimate
(\ref{Nikolsky_ineq_circle}) from~(\ref{Main_Inequality_Circle}). To prove~(\ref{Nikolsky_ineq_Real_axis_delta}),
we use~(\ref{Nikolsky_ineq_Real_axis}) and the following estimate for the weight~(\ref{Blaschke_Product_Real_Axis}):
$$
\mu_{*}(x)=
\sum_{k=1}^{\nu}\frac{n_k\,\Im z_k}{(x-\Re z_k)^2+(\Im z_k)^2}
\le \sum_{k=1}^\nu \frac{n_k}{\Im z_k}
 \le \frac{n}{\delta},\qquad z_k\notin S_\delta,
 \qquad z_k\in \mathbb{C}^+.
 $$

The statements about sharpness follow from Examples~\ref{example3} and \ref{example4}.
\end{proof}

\subsection{Duality of the inequalities of Jackson-Nikolskii type for rational functions}

Another corollary of Theorem~\ref{Th_Nikolski_Real_axis_poinwise} is
\begin{theorem}
\label{Theorem_Alternative_Real_axis}
Under the assumptions of Theorem~\ref{Th_Nikolski_Real_axis_poinwise}, the following alternative holds for any $d\in
\mathbb{R}$:
\begin{equation}
\label{Theorem_Alternative_R}
 |R(x)|^d <\mu_{*}(x)\qquad
\text{or} \qquad |R(x)|^{2m-d}\le\frac{m}{\pi}
 \|R\|_{L^{2m}(\mathbb{R})}^{2m},\qquad m\in \mathbb{N}.
\end{equation}
In particular, for $m=d=1$ we have
$$
|R(x)|\le
\max\left\{\mu_{*}(x),\frac{1}{\pi}\,\|R\|_{L^{2}(\mathbb{R})}^{2}\right\}\qquad
\forall x\in\mathbb{R}.
$$
\end{theorem}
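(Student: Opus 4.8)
The plan is to obtain the alternative (\ref{Theorem_Alternative_R}) as a direct formal consequence of the pointwise Jackson--Nikolskii estimate (\ref{Main_Inequality_Real_axis}) from Theorem~\ref{Th_Nikolski_Real_axis_poinwise}; no new analysis is required, only an elementary case split. First I would fix $x\in\mathbb{R}$ and $m\in\mathbb{N}$ and record that $\mu_{*}(x)>0$, since by (\ref{Blaschke_Product_Real_Axis}) one has $\mu_{*}(x)=\sum_{k=1}^{\nu}n_k\,\Im z_k/|x-\overline{z_k}|^2$ with every $\Im z_k>0$. I would then dispose of the degenerate case $R(x)=0$ at once: for $d>0$ we get $|R(x)|^{d}=0<\mu_{*}(x)$, so the first inequality in (\ref{Theorem_Alternative_R}) holds, while for $d\le 0$ the exponent $2m-d$ is positive and $|R(x)|^{2m-d}=0\le\frac{m}{\pi}\|R\|_{L^{2m}(\mathbb{R})}^{2m}$, so the second holds. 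After that I may assume $R(x)\ne0$, so that every power appearing below is a well-defined positive real.

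The main step is then a dichotomy. If $|R(x)|^{d}<\mu_{*}(x)$, the first alternative in (\ref{Theorem_Alternative_R}) is exactly the desired conclusion. If not, then $\mu_{*}(x)\le|R(x)|^{d}$, and feeding this into (\ref{Main_Inequality_Real_axis}) gives
\[
|R(x)|^{2m}\le\frac{m}{\pi}\|R\|_{L^{2m}(\mathbb{R})}^{2m}\,\mu_{*}(x)\le\frac{m}{\pi}\|R\|_{L^{2m}(\mathbb{R})}^{2m}\,|R(x)|^{d};
\]
dividing through by $|R(x)|^{d}>0$ leaves $|R(x)|^{2m-d}\le\frac{m}{\pi}\|R\|_{L^{2m}(\mathbb{R})}^{2m}$, which is the second alternative. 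As $x$ is arbitrary, this establishes (\ref{Theorem_Alternative_R}).

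For the concluding ``in particular'' assertion I would simply specialize to $m=d=1$: then (\ref{Theorem_Alternative_R}) says that at each $x\in\mathbb{R}$ either $|R(x)|<\mu_{*}(x)$ or $|R(x)|\le\frac1\pi\|R\|_{L^{2}(\mathbb{R})}^{2}$, and in either case $|R(x)|\le\max\{\mu_{*}(x),\ \frac1\pi\|R\|_{L^{2}(\mathbb{R})}^{2}\}$. I do not anticipate any genuine obstacle here: the entire content of the theorem is already packaged in the pointwise inequality (\ref{Main_Inequality_Real_axis}), and the only point needing a word of care is the bookkeeping around $R(x)=0$ and the sign of $2m-d$, which is handled at the outset.
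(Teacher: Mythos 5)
Your proposal is correct and follows essentially the same route as the paper: the paper's own (one-line) proof is exactly your dichotomy, substituting $\mu_{*}(x)\le|R(x)|^{d}$ into the pointwise estimate (\ref{Main_Inequality_Real_axis}) to get the second alternative. Your additional bookkeeping for $R(x)=0$ and the sign of $2m-d$ is harmless extra care that the paper leaves implicit.
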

Indeed, if the former inequality in (\ref{Theorem_Alternative_R}) does not hold and
$\mu_{*}(x)\le|R(x)|^d$, then, substituting this estimate for $\mu_{*}$ to the left hand side of the inequality~(\ref{Main_Inequality_Real_axis}), we get the latter estimate in~(\ref{Theorem_Alternative_R}). Such alternatives enable us to obtain non-linear inequalities of Jackson-Nikolskii type for special classes of rational functions. Below we derive such inequalities for logarithmic derivatives of complex polynomials (i.e. for simple partial fractions).

\subsection{Inequalities of Jackson-Nikolskii type for simple partial fractions}
\label{SPF_Section}

Recall that in rational approximation theory a \textit{simple partial fraction} (\textit{SPF})
of degree $n$ is a function of the form
\begin{equation*}
\label{rho} \rho_n(z):=\sum_{k=1}^n\frac{1}{z-z_k},\qquad z_k\in
\mathbb{C}.
\end{equation*}
Obviously, SPFs are the logarithmic derivatives of algebraic polynomials with the zeros $z_k$.

Problems connected with various extremal properties of SPFs attract attention of many authors  (see the survey  \cite{DanKomChu} and the references there and in the current section). One of such problems is the inequalities between various metrics (i.e. the inequalities of Jackson-Nikolskii type). They were first considered in~\cite{D1994}. Later on, various refinements and extensions of the results from \cite{D1994} were obtained in   \cite{DAN-DOD,Kayumov2012,Kayumov2011,Kayumov2012mz} and other papers.

The purpose of this section is to further develop this topic applying the quadrature formulas. The specificity of SPFs allows us to use the above-obtained inequalities for general rational functions effectively in the case of SPFs.  In particular, we will derive sharp inequalities for SPFs on the real axis. Let us formulate the main result.

If all the poles $z_k$ of the function $\rho_n$ lie on the upper (lower)
half-plane $\mathbb{C}^+$ ($\mathbb{C}^-$), then we write $\rho_n(z)=\rho^+_n(z)$ ($\rho_n(z)=\rho^-_n(z)$) to make things clear. Set
$$
d(\rho^{\pm}_n;p):=2\pi\;\frac{\|\rho_n^{\pm}\|^{p-1}_{L^\infty(\mathbb{R})}}
{\|\rho_n^{\pm}\|_{L^{p}(\mathbb{R})}^{p}},\qquad p>1.
$$
\begin{theorem}
\label{SPF_Nik1+++}
It holds that
\begin{equation}
\label{SPF_Nik1} d(\rho^{\pm}_n;p)\le 2m_p,\qquad
m_p\in \mathbb{N}\cap [\tfrac{p}{2},1+\tfrac{p}{2}),\qquad p>1;
\end{equation}
\begin{equation}
\label{SPF_Nik2} d(\rho^{\pm}_n;p)\ge \frac{1}{n}\,
 {\cos \pi(1-\tfrac{p}{2})}, \quad 1<p\le 2,
 \qquad d(\rho^{\pm}_n;p) \ge
\frac{1}{n},\quad p\ge 2.
\end{equation}
The inequality $(\ref{SPF_Nik1})$ is sharp for $p=2$ and $p=4$.
\end{theorem}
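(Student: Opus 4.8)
The plan is to deduce the upper bound \eqref{SPF_Nik1} directly from the general rational-function inequality \eqref{Nikolsky_ineq_Real_axis}, and the lower bounds \eqref{SPF_Nik2} from an explicit test function together with a sharpness example from Theorem \ref{Th_Nikolski_Real_axis_poinwise}. First I would treat \eqref{SPF_Nik1}. Apply \eqref{Main_Inequality_Real_axis} (equivalently \eqref{Nikolsky_ineq_Real_axis}) to $R=\rho_n^{\pm}$: this is a proper rational fraction of degree $n$ whose poles avoid $\mathbb R$, so the theorem applies with $p=2m_p$ and $q=\infty$. The key observation specific to SPFs is that the weight $\mu_*$ associated to $\rho_n^+$ is \emph{exactly} $\Im \rho_n^+$ up to sign: indeed $\mu_*(x)=\sum_k \Im z_k/|x-\overline{z_k}|^2 = \Im\sum_k 1/(x-z_k) = \pm\Im\rho_n^{\pm}(x)$, since each pole has multiplicity one and $\mathcal R_*$ contributes the conjugate poles $\overline{z_k}$. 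Hence $\|\mu_*\|_{L^\infty(\mathbb R)}\le \|\rho_n^{\pm}\|_{L^\infty(\mathbb R)}$, and substituting into \eqref{Nikolsky_ineq_Real_axis} with $(q,p)=(\infty,2m_p)$ gives
$$
\|\rho_n^{\pm}\|_{L^\infty(\mathbb R)}^{2m_p}\le \frac{m_p}{\pi}\,\|\rho_n^{\pm}\|_{L^\infty(\mathbb R)}\,\|\rho_n^{\pm}\|_{L^{2m_p}(\mathbb R)}^{2m_p},
$$
which, dividing by $\|\rho_n^{\pm}\|_{L^\infty}$ and rearranging, is precisely $d(\rho_n^{\pm};2m_p)\le 2m_p$. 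For general $p$ one uses the standard interpolation trick already employed in the proof of Theorem \ref{theorem3.2}: splitting $|\rho_n^{\pm}|^{2m_p}=|\rho_n^{\pm}|^{2m_p-p}|\rho_n^{\pm}|^p$ and bounding the first factor by the sup-norm reduces the exponent $2m_p$ to the arbitrary $p\in(2(m_p-1),2m_p]$, yielding $d(\rho_n^{\pm};p)\le 2m_p$ for all $p>1$.

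For the lower bounds \eqref{SPF_Nik2} I would exhibit a specific SPF making $d$ small. The natural candidate is $\rho_n(z)=n/(z-z_0)$ coming from a polynomial with a single zero of multiplicity $n$ — but that is not an SPF in our sense; instead take the honest SPF $\rho_n(z)=\sum_{k=1}^n 1/(z-y_0 i)$ with all poles coinciding, i.e. $\rho_n(z)=n/(z-y_0 i)=n\rho(z)$ where $\rho$ is the function from Example \ref{example4}. Then $\|\rho_n\|_{L^p(\mathbb R)}=n\|\rho\|_{L^p(\mathbb R)}=n$ (after the normalization $y_0=y_0(p)$), $\|\rho_n\|_{L^\infty(\mathbb R)}=n/y_0(p)$, so
$$
d(\rho_n;p)=2\pi\,\frac{(n/y_0)^{p-1}}{n^p}=\frac{2\pi}{n}\,\frac{1}{y_0(p)^{p-1}}=\frac{2\pi}{n}\cdot\frac{(p-1)\,\mathrm{B}(p/2,p/2)}{\pi\,2^{2-p}}.
$$
A short computation with the beta function — using $\mathrm{B}(p/2,p/2)=\Gamma(p/2)^2/\Gamma(p)$ and the duplication formula — reduces the last factor to $\tfrac1n\cos\pi(1-\tfrac p2)$ for $1<p\le 2$ (where this quantity is the \emph{infimum} over poles, because moving the poles apart only increases the ratio) and shows $d\ge \tfrac1n$ for $p\ge 2$. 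This gives \eqref{SPF_Nik2}. Finally, the sharpness of \eqref{SPF_Nik1} at $p=2$ and $p=4$: here $m_2=1$, $m_4=2$, and the claim is that the constants $2$ and $4$ in $d(\rho_n^{\pm};p)\le 2m_p$ are attained (asymptotically in $n$, or exactly for suitable configurations); this follows from Example \ref{example4}, where $y_0(2)=\pi$ and $y_0(4)=\tfrac12\sqrt[3]{4\pi}$ make the pointwise inequality \eqref{Main_Inequality_Real_axis} an equality for $\rho$, hence — tracing the single-pole case $n=1$ through the argument above — $d(\rho_1^{\pm};2)=2$ and $d(\rho_1^{\pm};4)=4$.

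The main obstacle I anticipate is the beta-function identity needed for the lower bound: one must verify that $(p-1)\,\mathrm{B}(p/2,p/2)\,2^{p-2}/\pi$ equals $\cos\pi(1-p/2)=-\cos(\pi p/2)$ on $(1,2]$ and is $\ge 1/(2\pi)\cdot(\text{stuff})$ elsewhere — this is a clean but fiddly manipulation of Gamma functions and the reflection/duplication formulas, and one must also argue that among all degree-$n$ SPFs the coincident-pole configuration minimizes $d(\rho_n^{\pm};p)$ (for $p\le 2$), which requires a convexity or rearrangement argument on $\|\rho_n\|_{L^p}$ versus $\|\rho_n\|_{L^\infty}$ as the poles spread out. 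Everything else — the upper bound and the identification $\mu_*=\pm\Im\rho_n^{\pm}$ — is immediate from the machinery already in place.
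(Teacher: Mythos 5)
Your treatment of the upper bound \eqref{SPF_Nik1} and of the sharpness claim is sound and essentially coincides with the paper's argument: the identity $\mu_{*}=\Im\rho_n^{+}\le|\rho_n^{+}|$ on $\mathbb{R}$, the pointwise inequality \eqref{Main_Inequality_Real_axis} evaluated where $|\rho_n^{+}|$ attains its maximum, the interpolation trick of Section~\ref{L-p-q+++} to pass from $2m_p$ to general $p$, and the single-pole fraction \eqref{rho_wave} with $y_0(2)=\pi$, $y_0(4)=\tfrac12\sqrt[3]{4\pi}$ for equality at $p=2,4$ --- all of this is exactly what the paper does.

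The lower bound \eqref{SPF_Nik2} is where your proposal has a genuine gap. The inequality $d(\rho_n^{\pm};p)\ge\frac1n\cos\pi(1-\tfrac p2)$ (resp. $\ge\frac1n$) is a \emph{universal} statement about every SPF of degree $n$ with poles in one half-plane; evaluating $d$ on the single coincident-pole configuration proves nothing about the other configurations, and the extremality claim you lean on (``moving the poles apart only increases the ratio'') is precisely the missing hard step --- you offer no convexity or rearrangement argument for it, and it is not the route the paper takes. Moreover your computation does not even produce the claimed value: for $\rho_n(z)=n/(z-y_0(p)i)$ one gets $d(\rho_n;p)=2\pi/(n\,y_0(p)^{p-1})$, which at $p=2$ equals $2/n$, not $\cos\pi(1-\tfrac p2)/n=1/n$, so the beta-function identity you hope to verify is false, and the example only shows consistency with the bound, not the bound itself. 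The paper instead proves \eqref{SPF_Nik2} as follows: for $1<p\le 2$ it invokes the inequality \eqref{Dan2010_1} from \cite{D2010}, bounding its right-hand side by $2\pi n\,\|\rho_n^{+}\|_{L^\infty(\mathbb{R})}^{p-1}$ via the maximum modulus principle; for $p\ge 2$ it applies the quadrature identity \eqref{Int_formula_Real_axis_DanSem++} (with $\hat\mu_{*}=\Im\rho_n^{+}$ at the notches) to obtain $\|\rho_n^{+}\|_{L^{2m}(\mathbb{R})}^{2m}\le\|\rho_n^{+}\|_{L^{\infty}(\mathbb{R})}^{2m-2}\|\rho_n^{+}\|_{L^{2}(\mathbb{R})}^{2}$, combines this with $\|\rho_n^{+}\|_{L^{2}(\mathbb{R})}^{2}\le 2\pi n\,\|\rho_n^{+}\|_{L^{\infty}(\mathbb{R})}$ from \cite{D2010}, and finishes with an interpolation step for $2m\le p<2(m+1)$. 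Without these external inequalities (or some substitute establishing the bound for arbitrary pole configurations), your argument for \eqref{SPF_Nik2} does not go through.
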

It is worth mentioning that the power of $n$ in the
estimate~(\ref{SPF_Nik2}) is exact; one can find the corresponding
examples in~\cite{DAN-DOD}.

\begin{proof}
It is enough to prove the inequalities only for $d(\rho^{+}_n;p)$. Note that
 $\mu_{*}(x)=\Im \rho_n^+(x)\le |\rho_n^+(x)|$ and choose $x$
so that $\rho_n^+(x)=\|\rho_n^+\|_{L^\infty(\mathbb{R})}$. Then the estimate~(\ref{SPF_Nik1}) follows from the alternative~(\ref{Theorem_Alternative_R}) with $d=1$ and $1<p\le 2m$ (one has to use the same trick as in Section~\ref{L-p-q+++}):
$$
\|\rho_n^+\|_{L^{\infty}(\mathbb{R})}^{2m-1}\le \frac{m}{\pi}\;
 \|\rho_n^+\|_{L^{2m}(\mathbb{R})}^{2m}
\le \frac{m}{\pi}
 \|\rho_n^+\|^{2m-p}_{L^{\infty}(\mathbb{R})}
\|\rho_n^+\|^{p}_{L^{p}(\mathbb{R})},
$$
where it is better to take the minimal possible value of  $m=m_p$ from
(\ref{SPF_Nik1}). We are now in the position to obtain the inequality~(\ref{SPF_Nik2}).
To begin with, take $1<p\le 2$. We need \cite[Inequality~(18)]{D2010}:
\begin{equation}
\label{Dan2010_1} \|\rho_n^+\|^p_{L^p(\mathbb{R})}
\cos\pi(1-\tfrac{p}{2})\le 2\pi\;\Im\left(e^{-i\pi
(1-\tfrac{p}{2})}
\sum_{k=1}^n(\rho_n^+(\overline{z_k}))^{p-1}\right),\qquad 1<p<3.
\end{equation}
By the maximum modulus principle, the right hand side of the inequality~(\ref{Dan2010_1}) is bounded from above by $2\pi n
\|\rho_n^+\|_{L^\infty(\mathbb{R})}^{p-1}$. This immediately implies the former inequality in~(\ref{SPF_Nik2}) for $1<p\le 2$. A similar inequality can be obtained with the help of (\ref{Dan2010_1}) even for $2\le p<3$ but the latter inequality in~(\ref{SPF_Nik2}) is obviously more precise in this case.

Now suppose that $p\ge 2$. Using the identity~(\ref{Int_formula_Real_axis_DanSem++}) with $x_k=x_k(\varphi)$ and some $\varphi$ twice, we get
$$
\|\rho_n^+\|_{L^{2m}(\mathbb{R})}^{2m}=
\frac{\pi}{2m}\sum_{k=1}^{2mn}\frac{|\rho_n^+(x_k)|^{2m}}{\Im
\rho_n^+(x_k)}\le \|\rho_n^+\|^{2m-2}_{L^{\infty}(\mathbb{R})}
 \frac{\pi}{2m}\;\sum_{k=1}^{2mn}\frac{|\rho_n^+(x_k)|^2}{\Im
\rho_n^+(x_k)}=
 \|\rho_n^+\|^{2m-2}_{L^{\infty}(\mathbb{R})}
\|\rho_n^+\|^2_{L^{2}(\mathbb{R})}.
$$
This and the inequality $\|\rho_n^+\|_{L^{2}(\mathbb{R})}^2
 \le 2\pi n \|\rho_n^+\|_{L^{\infty}(\mathbb{R})}$
from \cite[Formula (36)]{D2010} yield
$$
\|\rho_n^+\|_{L^{2m}(\mathbb{R})}^{2m}\le 2\pi
n\;\|\rho_n^+\|^{2m-1}_{L^{\infty}(\mathbb{R})}\qquad
\Rightarrow\qquad
A:=2\pi
n\; \|\rho_n^+\|^{2m-1}_{L^{\infty}(\mathbb{R})}
  \|\rho_n^+\|_{L^{2m}(\mathbb{R})}^{-2m}\ge 1.
$$
Furthermore, if $m\in
\mathbb{N}$ is such that $2m\le p<2(m+1)$, then
$$
\|\rho_n^+\|_{L^{p}(\mathbb{R})}^{p}=
\int_\mathbb{R}|\rho_n^+|^{p-2m}|\rho_n^+|^{2m}
\le \|\rho_n^+\|^{p-2m}_{L^{\infty}(\mathbb{R})}
\int_\mathbb{R}|\rho_n^+|^{2m}=
\|\rho_n^+\|^{p-2m}_{L^{\infty}(\mathbb{R})}
\|\rho_n^+\|_{L^{2m}(\mathbb{R})}^{2m},
$$
hence
$$
\|\rho_n^+\|_{L^{p}(\mathbb{R})}^{p}\le A\,
\|\rho_n^+\|_{L^{p}(\mathbb{R})}^{p}\le
A\,\|\rho_n^+\|^{p-2m}_{L^{\infty}(\mathbb{R})}
\|\rho_n^+\|_{L^{2m}(\mathbb{R})}^{2m}=2\pi
n\;\|\rho_n^+\|^{p-1}_{L^{\infty}(\mathbb{R})},
$$
which is equivalent to the latter inequality in~(\ref{SPF_Nik2}).

We now prove the sharpness of the estimate~$(\ref{SPF_Nik1})$. For
$p=2,4$ (i.e. for $m_2=1$ and $m_4=2$) this estimate reads as
$d(\rho^{+}_n;2)\le 2$ and $d(\rho^{+}_n;4)\le 4$, correspondingly. In both cases it becomes an equality for the single-pole SPF $\rho(p;z)$ from~(\ref{rho_wave}). This follows from the obvious identities
$$
\|{\rho(p,\cdot)}\|_{L^p(\mathbb{R})}=1, \qquad
\|{\rho(2,\cdot)}\|_{L^{\infty}(\mathbb{R})}=1/\pi,\qquad
\|{\rho(4,\cdot)}\|_{L^{\infty}(\mathbb{R})}=(2/\pi)^{1/3}.
$$
\end{proof}
\begin{remark} Let us compare Theorem~\ref{SPF_Nik1+++} with several previous results. It is convenient here to introduce the quantity
$$
D(\rho^{\pm}_n;p):=
\left(\frac{1}{2\pi}d(\rho^{\pm}_n;p)\right)^{{p'}/p}=\frac{\|\rho_n^{\pm}\|_{L^\infty(\mathbb{R})}}
{\|\rho_n^{\pm}\|_{L^{p}(\mathbb{R})}^{{p'}}},\qquad
 \frac{1}{{p'}}+\frac{1}{p}=1,
 $$
so that the numerator and the denominator of $D(\rho^{\pm}_n;p)$ have the same dimension as the SPF itself. In particular, under the transformation $\tilde\rho^{\pm}_n(x):=a\rho^{\pm}_n(ax)$, $a>0$,
preserving the form of SPF, the norms that we estimate depend on $a$ linearly:
$$
\|\tilde\rho^{\pm}_n\|_{L^p(\mathbb{R})}^{p'} =a
\|\rho^{\pm}_n\|_{L^p(\mathbb{R})}^{p'}.
$$
In \cite[Corollary]{D1994} for $p>1$ the following  estimate (being independent of $n$) is proved:
\begin{equation}
\label{dan-1994} D(\rho^{\pm}_n;p)\le \sigma(p):=p\,\sin^{-p'}
\frac{\pi}{p}.
\end{equation}
Note that $\lim_{p\to \infty}\sigma(p)=\infty$ and $\lim_{p\to
1+}\sigma(p)=\infty$, so the estimate loses meaning if $p$ is large enough or close to~$1$. Some majorant, being more precise than in (\ref{dan-1994}), can be derived from~\cite{Borodin2007} but it also tends to infinity as $p\to 1+$ or $p\to\infty$. On the contrary, (\ref{SPF_Nik1}) and
(\ref{SPF_Nik2}) give
\begin{equation*}
\label{left-right+++} \sigma_1(p):=\left(\frac{1}{2\pi
n}\right)^{\frac{1}{p-1}}\le D(\rho^{\pm}_n;p)\le \sigma_2(p):=
 \left(\frac{p+2}{2\pi}\right)^{\frac{1}{p-1}}< 1.1,\qquad p\ge 2,
\end{equation*}
where $\lim_{p\to \infty}\sigma_{1,2}(p)=1$ for a fixed $n$,
i.e. the inequalities coincide (they coincide even if $p\to
1+$ as then the minorant and the majorant tend to zero, see the former inequality in~(\ref{SPF_Nik2})). In this sense the inequality~(\ref{SPF_Nik1}) refines the estimate (\ref{dan-1994}) and the one from~\cite{Borodin2007} significantly.
\end{remark}
\begin{theorem}
\label{SPF_Nik1_common+++} Without any assumptions on the poles, it holds that
\begin{equation}
\label{SPF_Nik2_common+++} \|\rho_n\|_{L^q(\mathbb{R})}^{q'} \le
2^{\,q'-p'} \left(\frac{m_p}{\pi}\right)^{{p'}{q'}
\left(\frac{1}{p}-\frac{1}{q}\right)}(1+h_p)^{p'}
\|\rho_n\|_{L^p(\mathbb{R})}^{p'}, \quad
\frac{1}{q}+\frac{1}{{q'}}=1,\quad \frac{1}{p}+\frac{1}{{p'}}=1,
\end{equation}
where $1<p<q\le \infty$, $m_p\in \mathbb{N}\cap
[\tfrac{p}{2},1+\tfrac{p}{2})$, and $h_p$ is the norm of the Hilbert transform. Recall that
$$
h_p=\left\{\begin{array}{ll}
      \tan \tfrac{\pi}{2p}, & 1<p\le 2, \\
      \cot  \tfrac{\pi}{2p} & 2\le p<\infty.
    \end{array}\right.
$$
\end{theorem}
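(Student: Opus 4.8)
The plan is to reduce the general (no assumption on the poles) case to the two half-plane cases already handled in Theorem~\ref{SPF_Nik1+++} via the Hilbert transform, and then string together the resulting norm estimates with H\"older's inequality exactly as in the proof of the $(q,p)$-inequalities in Section~\ref{L-p-q+++}. First I would split the SPF according to the location of its poles: write $\rho_n = \rho^+ + \rho^-$, where $\rho^+$ collects the summands $1/(x-z_k)$ with $z_k\in\mathbb{C}^+$ (say $n^+$ of them) and $\rho^-$ the ones with $z_k\in\mathbb{C}^-$ (there are $n^-=n-n^+$ of them, and poles on $\mathbb{R}$ may be excluded since then the $L^p$-norms are infinite and the inequality is vacuous). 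The key observation is that on $\mathbb{R}$ the Hilbert transform acts on $\rho^\pm$ essentially by multiplication by $\mp i$ (each Cauchy kernel $1/(x-z_k)$ with $\Im z_k>0$ is the boundary value of a function analytic in $\mathbb{C}^+$, hence an eigenfunction of the Hilbert projection); consequently $\rho^+ = \tfrac12(I - iH)\rho_n$ and $\rho^- = \tfrac12(I+iH)\rho_n$ on $\mathbb{R}$, up to the usual normalization conventions. Therefore, by the boundedness of $H$ on $L^p(\mathbb{R})$ with the sharp norm $h_p$ (Pichorides' constant, displayed in the statement),
\[
\|\rho^\pm\|_{L^p(\mathbb{R})}\le \tfrac12\bigl(\|\rho_n\|_{L^p(\mathbb{R})}+\|H\rho_n\|_{L^p(\mathbb{R})}\bigr)\le \tfrac12(1+h_p)\,\|\rho_n\|_{L^p(\mathbb{R})}.
\]

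Next I would apply the one-sided bound $(\ref{SPF_Nik1})$ to each piece. Since $\rho^\pm$ has all its poles in a single half-plane and degree $n^\pm\le n$, the definition of $d(\rho^\pm;p)$ together with $(\ref{SPF_Nik1})$ gives $\|\rho^\pm\|_{L^\infty(\mathbb{R})}^{p-1}\le \tfrac{m_p}{\pi}\,\|\rho^\pm\|_{L^p(\mathbb{R})}^{p}$, i.e.
\[
\|\rho^\pm\|_{L^\infty(\mathbb{R})}\le \Bigl(\tfrac{m_p}{\pi}\Bigr)^{1/(p-1)}\|\rho^\pm\|_{L^p(\mathbb{R})}^{p/(p-1)}=\Bigl(\tfrac{m_p}{\pi}\Bigr)^{p'/p}\|\rho^\pm\|_{L^p(\mathbb{R})}^{p'}.
\]
(Here I used $p'/p=1/(p-1)$ and $p'=p/(p-1)$.) Combining with the Hilbert-transform estimate of the first paragraph,
\[
\|\rho^\pm\|_{L^\infty(\mathbb{R})}\le \Bigl(\tfrac{m_p}{\pi}\Bigr)^{p'/p}\,2^{-p'}(1+h_p)^{p'}\,\|\rho_n\|_{L^p(\mathbb{R})}^{p'}.
\]
Then $\|\rho_n\|_{L^\infty(\mathbb{R})}\le \|\rho^+\|_{L^\infty(\mathbb{R})}+\|\rho^-\|_{L^\infty(\mathbb{R})}\le 2\cdot(\text{that bound})$, which after collecting powers of $2$ yields the $q=\infty$ case of $(\ref{SPF_Nik2_common+++})$ with constant $2^{\,1-p'}$ in front of $(m_p/\pi)^{p'/p}(1+h_p)^{p'}$ — and $2^{\,1-p'}=2^{\,q'-p'}$ when $q=\infty$ (so $q'=1$).

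Finally, for finite $q$ with $p<q<\infty$ I would interpolate in the standard way (as in Section~\ref{L-p-q+++}): writing $\int_{\mathbb R}|\rho_n|^q=\int_{\mathbb R}|\rho_n|^{q-p}|\rho_n|^p\le \|\rho_n\|_{L^\infty(\mathbb R)}^{q-p}\|\rho_n\|_{L^p(\mathbb R)}^p$, and substituting the $L^\infty$ bound just obtained, one gets $\|\rho_n\|_{L^q(\mathbb R)}^q$ bounded by a power of $\|\rho_n\|_{L^p(\mathbb R)}$; raising to $q'/q$ and simplifying the exponents (using $\tfrac1p-\tfrac1q=\tfrac{q-p}{pq}$, $\tfrac1{p'}-\tfrac1{q'}=\tfrac1q-\tfrac1p$ with opposite sign, and keeping careful track so the power of $(m_p/\pi)$ comes out as $p'q'(\tfrac1p-\tfrac1q)$ and the power of $(1+h_p)/2$ comes out as $p'$ scaled by $q'-p'$) delivers $(\ref{SPF_Nik2_common+++})$ exactly as stated. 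The main obstacle is the second of these steps: one must verify that $H$ really does split $\rho_n$ cleanly into its two half-plane parts on $\mathbb{R}$ (equivalently, that $1/(x-z_k)\in H^p$ of the appropriate half-plane and hence is an eigenvector of the Riesz/Szeg\H{o} projection), and that the normalization of $H$ matches the sharp Pichorides constants written in the statement; the remaining algebra of exponents is routine bookkeeping of the conjugate indices.
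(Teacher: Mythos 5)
Your argument is correct, and it assembles the same three ingredients as the paper --- the one-sided bound (\ref{SPF_Nik1}), the splitting estimate $\|\rho_n^{\pm}\|_{L^p(\mathbb{R})}\le\tfrac12(1+h_p)\|\rho_n\|_{L^p(\mathbb{R})}$, and elementary interpolation --- but in a different order. The paper cites the splitting estimate as (\ref{ineq_D2010}) from Danchenko's $L^p$ paper rather than re-deriving it from the eigenfunction property of the Cauchy kernels under the Hilbert transform (your derivation of it is fine and is essentially how it is proved there); it then applies a dilation--normalization trick to each half-plane piece $\rho_n^{\pm}$, which is exactly equivalent to your interpolation $\|\cdot\|_q^q\le\|\cdot\|_\infty^{q-p}\|\cdot\|_p^p$ applied per piece, obtaining the one-sided $(q,p)$-inequality (\ref{dan-DOD+++}), and only at the end combines the two pieces via Minkowski, the convexity bound $(a+b)^{q'}\le 2^{q'-1}(a^{q'}+b^{q'})$ and (\ref{ineq_D2010}). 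You instead combine first (getting the $q=\infty$ case for the full $\rho_n$) and interpolate last, for $\rho_n$ itself. Your bookkeeping does close: raising your $L^\infty$ bound to the power $\tfrac{q-p}{q-1}$ gives the factor $2^{(1-p')\frac{q-p}{q-1}}=2^{\,q'-p'}$ and the exponent $\tfrac{p'}{p}\cdot\tfrac{q-p}{q-1}=p'q'(\tfrac1p-\tfrac1q)$ on $m_p/\pi$, while the exponent on $1+h_p$ comes out as $p'\tfrac{q-p}{q-1}\le p'$ (not ``$p'$ scaled by $q'-p'$'' as you phrased it), so since $1+h_p\ge 1$ you in fact obtain a slightly sharper inequality for finite $q$ than (\ref{SPF_Nik2_common+++}), which of course implies it; the paper's per-piece route yields the constant exactly as stated. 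The only loose ends in your write-up are cosmetic: state that if some $z_k\in\mathbb{R}$ or $\|\rho_n\|_{L^p}=\infty$ the claim is vacuous (so $\|\rho_n\|_{L^\infty}<\infty$ and the interpolation is legitimate), and note that the degenerate case where all poles lie in one half-plane is covered trivially since then one of $\rho_n^{\pm}$ vanishes.
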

\begin{proof}
To get an intermediate estimate (see (\ref{dan-DOD+++})),
we repeat the arguments from \cite[proof of Theorem~2]{DAN-DOD}.
We suppose that $\alpha=\|\rho^{+}_n\|_{L^\infty(\mathbb{R})}$ and introduce the new SPF $\hat{\rho}(x)=\alpha^{-1}\rho^{+}_n\left(x\alpha^{-1}\right)$ with
the $\sup$-norm on ${\mathbb R}$ equal to $1$. It can be easily checked (see \cite{DAN-DOD}) that
$$
\|\hat{\rho}\|_{L^{q}(\mathbb{R})}^q=
 {\alpha}^{1-q}\|\rho^{+}_n\|_{L^{q}(\mathbb{R})}^q,\quad
\|\hat{\rho}\|_{L^{p}(\mathbb{R})}^p=
{\alpha}^{1-p}\|\rho^{+}_n\|_{L^{p}(\mathbb{R})}^p.
$$
Since $\|\hat{\rho}\|_{L^\infty(\mathbb{R})}=1$, we have $\|\hat{\rho}\|^q_q \le
\|\hat{\rho}\|^p_p$ for $q>p$, i.e.
$$
{\alpha}^{1-q}\|\rho^{+}_n\|^q_{L^{q}(\mathbb{R})} \le
{\alpha}^{1-p}\|\rho^{+}_n\|^p_{L^{p}(\mathbb{R})},\qquad
\|\rho^{+}_n\|^q_{L^{q}(\mathbb{R})} \le
{\alpha}^{q-p}\|\rho^{+}_n\|^p_{L^{p}(\mathbb{R})}.
$$
Taking into account the estimate (\ref{SPF_Nik1}) for $\alpha$, we conclude that
$$
\|\rho^{+}_n\|^q_{L^{q}(\mathbb{R})} \le
\left(\frac{m_p}{\pi}\right)^{(q-p)/(p-1)}
 \|\rho^{+}_n\|_{L^{p}(\mathbb{R})}^{p'(q-p)}\cdot
\|\rho^{+}_n\|^p_{L^{p}(\mathbb{R})}=\left(\frac{m_p}{\pi}\right)^{(q-p)/(p-1)}
 \|\rho^{+}_n\|_{L^{p}(\mathbb{R})}^{p'(q-1)}.
$$
The same estimate holds for $\rho^{-}_n$. Raising each side to the power  $1/(q-1)$ gives
\begin{equation}
\label{dan-DOD+++} \|\rho_n^{\pm}\|_{L^q(\mathbb{R})}^{q'}\le
\left(\frac{m_p}{\pi}\right)^{{p'}{q'}\left(\frac{1}{p}-
\frac{1}{q}\right)}\|\rho_n^{\pm}\|_{L^p(\mathbb{R})}^{p'}.
\end{equation}
Furthermore, it follows from \cite[Inequality (21)]{D2010} that
\begin{equation}
\label{ineq_D2010} \|\rho_n^{\pm}\|_{L^p(\mathbb{R})}\le
\tfrac{1}{2}(1+h_p) \|\rho_n\|_{L^p(\mathbb{R})},\qquad
\rho_n=\rho_n^{+}+\rho_n^{-}.
\end{equation}
From this, by the Minkowski and Jensen inequalities and (\ref{ineq_D2010}), we obtain the required result:
\begin{eqnarray*}
\|\rho_n\|_{L^q(\mathbb{R})}^{q'} &\le & \left(\|\rho_n^{+}\|_{L^q(\mathbb{R})}+\|\rho_n^{-}\|_{L^q(\mathbb{R})}\right)^{q'}\\
&\le & 2^{\,q'-1} \left(\|\rho_n^{+}\|_{L^q(\mathbb{R})}^{q'}+\|\rho_n^{-}\|_{L^q(\mathbb{R})}^{q'}\right)\\
 &\le & 2^{\,q'-1} \left(\frac{m_p}{\pi}\right)^{{p'}{q'}\left(\frac{1}{p}-
\frac{1}{q}\right)}\left(\|\rho_n^{+}\|_{L^p(\mathbb{R})}^{p'}+\|\rho_n^{-}\|_{L^p(\mathbb{R})}^{p'}\right)\\
 &\le& 2^{\,q'-p'} \left(\frac{m_p}{\pi}\right)^{{p'}{q'}\left(\frac{1}{p}-
\frac{1}{q}\right)}(1+h_p)^{p'} \|\rho_n\|_{L^p(\mathbb{R})}^{p'}.
\end{eqnarray*}
\end{proof}

Note that the constant in (\ref{SPF_Nik2_common+++}) is more precise than the analogous one in \cite[Theorem~2]{DAN-DOD}.

\section{Quadrature formulas and inequalities in the case of the semiaxis~$\mathbb{R}^+$}

\subsection{Inequalities of Jackson-Nikolskii type for rational functions}
As in the cases of the circle and the real axis, the quadrature formula (\ref{Quadrature_Semiaxis}) and the trick in Section~\ref{L-p-q+++} yield
\begin{theorem}
\label{Th_Nikolski_Real_Semiaxis_poinwise} Let $R(z)$ be a proper rational fraction of degree~$n$ whose poles do not belong to the real semiaxis~$\mathbb{R}^+$. Then it holds that
\begin{equation}
\label{Main_Inequality_Real_Semiaxis}
 \frac{|R(x^2)|^{2m}}{\mu_{*}(x)}\le
\frac{m}{\pi}\|R\|_{L^{2m}(\mathbb{R}^+;\frac{1}{\sqrt{x}})}^{2m}\qquad
\forall x\in \mathbb{R},\qquad m\in \mathbb{N};
\end{equation}
\begin{equation*}
\label{Main_Inequality_Real_Semiaxis+++}
\|R\|_{L^q(\mathbb{R}^+;\frac{1}{\sqrt{x}})}\le
\left(\frac{m_p}{\pi}
\|\mu_*\|_{L^\infty(\mathbb{R}^+)}\right)^{\frac{1}{p}-\frac{1}{q}}
\|R\|_{L^p(\mathbb{R}^+;\frac{1}{\sqrt{x}})},\qquad m_p\in
\mathbb{N}\cap [\tfrac{p}{2};1+\tfrac{p}{2}),
\end{equation*}
where the $($even$)$ function $\mu_{*}$ is defined in Theorem~$\ref{Th_Real_semiaxis}$ and $0<p< q\le\infty$.

Moreover, if all the poles of $R$ lie in the exterior of the parabola
$$
x=\frac{1}{4}\frac{y^2}{\delta^2}-\delta^2, \qquad \delta>0,
$$
then for $0<p< q\le\infty$,
\begin{equation*}
\label{Main_Inequality_Real_Semiaxis_Delta}
\|R\|_{L^q(\mathbb{R}^+;\frac{1}{\sqrt{x}})}\le
\left(\frac{2m_p\,n}{\pi\delta} \right)^{\frac{1}{p}-\frac{1}{q}}
\|R\|_{L^p(\mathbb{R}^+;\frac{1}{\sqrt{x}})}.
\end{equation*}
\end{theorem}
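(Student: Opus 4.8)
The plan is to reproduce, now starting from the quadrature formula~(\ref{Quadrature_Semiaxis}) of Theorem~\ref{Th_Real_semiaxis}, the scheme already carried out for the circle (Theorems~\ref{Th_Nikolski_Circle_poinwise} and~\ref{theorem3.2}) and for the real axis (Theorem~\ref{Th_Nikolski_Real_axis_poinwise}). First I would prove the pointwise bound~(\ref{Main_Inequality_Real_Semiaxis}). As in the cases already treated, the real notches $x_k=x_k(m,\varphi)$ occurring in~(\ref{Quadrature_Semiaxis}) run over $\overline{\mathbb R}$ continuously as $\varphi$ varies, so for a prescribed $x\in\mathbb R$ one can choose $\varphi$ with $x_1(m,\varphi)=x$; keeping only the first (nonnegative) summand in $\|R\|_{L^{2m}(\mathbb R^+;\,1/\sqrt x)}^{2m}=\frac{\pi}{m}\sum_{k=1}^{2mn}|R(x_k^2)|^{2m}/\mu_*(x_k)$ then gives~(\ref{Main_Inequality_Real_Semiaxis}). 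Equivalently, one may simply apply Theorem~\ref{Th_Nikolski_Real_axis_poinwise} to $R_0(x):=R(x^2)$, since $\int_{\mathbb R}|R_0(x)|^{2m}\,dx=\|R\|_{L^{2m}(\mathbb R^+;\,1/\sqrt x)}^{2m}$ and the weight $\mu_*$ of Theorem~\ref{Th_Real_semiaxis} is exactly the weight of Theorem~\ref{Th_Real_axis} attached to the poles of $R_0(z)\overline{R_0(\overline z)}$ in $\mathbb C^+$.

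For the two $(q,p)$-inequalities I would run the device of Section~\ref{L-p-q+++} verbatim. Since the weight $1/\sqrt x$ is positive a.e., one has $\sup_{x\in\mathbb R}|R(x^2)|=\|R\|_{L^\infty(\mathbb R^+)}=\|R\|_{L^\infty(\mathbb R^+;\,1/\sqrt x)}$, and $\mu_*$ being even, replacing $\mu_*(x)$ by $\|\mu_*\|_{L^\infty(\mathbb R^+)}$ in~(\ref{Main_Inequality_Real_Semiaxis}) yields $\|R\|_{L^\infty(\mathbb R^+)}^{2m}\le C_m\|R\|_{L^{2m}(\mathbb R^+;\,1/\sqrt x)}^{2m}$ with $C_m:=\frac{m}{\pi}\|\mu_*\|_{L^\infty(\mathbb R^+)}$. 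For $2(m-1)<p\le 2m$ with $m=m_p$, the elementary splittings $\int_{\mathbb R^+}|R|^{2m}x^{-1/2}\,dx\le\|R\|_{L^\infty(\mathbb R^+)}^{2m-p}\|R\|_{L^p(\mathbb R^+;\,1/\sqrt x)}^{p}$ and $\|R\|_{L^q(\mathbb R^+;\,1/\sqrt x)}^q\le\|R\|_{L^\infty(\mathbb R^+)}^{q-p}\|R\|_{L^p(\mathbb R^+;\,1/\sqrt x)}^{p}$ (valid for any measure, with the obvious modification when $q=\infty$) then deliver $\|R\|_{L^q(\mathbb R^+;\,1/\sqrt x)}\le C_{m_p}^{1/p-1/q}\|R\|_{L^p(\mathbb R^+;\,1/\sqrt x)}$, which is the first $(q,p)$-inequality.

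The parabola version requires only a uniform bound on $\|\mu_*\|_{L^\infty(\mathbb R^+)}$. Writing the poles of $R$ as $w_k=r_ke^{i\varphi_k}$ with $\varphi_k\in(0,2\pi)$ and multiplicities $n_k$, the poles of $R_0(z)\overline{R_0(\overline z)}$ in $\mathbb C^+$ are $z_k=\sqrt{r_k}e^{i\varphi_k/2}$ and $-\overline{z_k}=-\sqrt{r_k}e^{-i\varphi_k/2}$, both with imaginary part $\sqrt{r_k}\sin(\varphi_k/2)$; since $|x\mp z_k|^2\ge(\Im z_k)^2$ for real $x$, this gives $\mu_*(x)\le\sum_{k}2n_k/(\sqrt{r_k}\sin(\varphi_k/2))$. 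Using the identity $r_k\sin^2(\varphi_k/2)=\tfrac{1}{2}(|w_k|-\Re w_k)$, the hypothesis that $w_k$ lie in the exterior of the parabola $x=\tfrac{1}{4}y^{2}/\delta^{2}-\delta^{2}$ amounts to $|w_k|\ge\Re w_k+2\delta^2$, i.e. to $\sqrt{r_k}\sin(\varphi_k/2)\ge\delta$; hence $\|\mu_*\|_{L^\infty(\mathbb R^+)}\le\frac{2}{\delta}\sum_k n_k=\frac{2n}{\delta}$, and substituting this into the first $(q,p)$-inequality produces the stated constant $(2m_p n/(\pi\delta))^{1/p-1/q}$. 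The only step I expect to need genuine care is this last equivalence — identifying the image of the exterior of the parabola under $w\mapsto\sqrt w$ with a half-plane-type condition on the square roots of the poles, and separately checking the degenerate subcase $\Re w_k+2\delta^2<0$ (where both conditions hold automatically) — together with the bookkeeping ensuring that $R_0(z)\overline{R_0(\overline z)}$ has $2\nu$ distinct poles in $\mathbb C^+$ of total multiplicity $2n$, which is why the factor $2n$ rather than $n$ appears in the final constant.
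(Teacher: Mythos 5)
Your proposal is correct and follows essentially the same route as the paper: the pointwise bound (\ref{Main_Inequality_Real_Semiaxis}) comes from keeping one term of the variable-node quadrature formula (\ref{Quadrature_Semiaxis}), the $(q,p)$-inequalities from the standard trick of Section~\ref{L-p-q+++}, and the parabola case from the observation that the exterior of the parabola is the image of $\{|\Im z|\ge\delta\}$ under squaring, whence $\|\mu_*\|_{L^\infty(\mathbb{R}^+)}\le 2n/\delta$. Your explicit check of this last equivalence via $r_k\sin^2(\varphi_k/2)=\tfrac12\bigl(|w_k|-\Re w_k\bigr)$ only fills in a detail the paper states without computation.
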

The latter statement follows from the fact that under the
substitution $z=\tilde{z}^2$ the exterior of the stripe $|{\rm Im}\,
\tilde{z}|\ge \delta$ turns into the exterior of the above-mentioned
parabola. Therefore $|{\rm Im}\; \sqrt{z_k}|\ge \delta$ and
$\|\mu_*\|_{L^\infty(\mathbb{R}^+)}\le 2n/\delta$ in
Theorem~\ref{Th_Real_semiaxis}.

\subsection{Inequalities of Jackson-Nikolskii type for simple partial fractions}

Let all the poles of the SPF $\rho_n(x)=\sum_{k=1}^n(x-z_k)^{-1}$ lie in the acute beam angle $2\alpha<\pi$ with the bisector on the negative semiaxes:
$$
z_k=r_ke^{i\varphi_k}, \qquad \varphi_k\in
(\pi-\alpha,\pi+\alpha),\qquad \alpha\in (0,\pi/2).
$$
\begin{theorem}
\label{D+NIKOLSKY_R+} Given $m\in \mathbb{N}$, it holds that
\begin{equation}
\label{1000+++} \|\rho_n\|_{L^\infty(\mathbb{R}^+)}^{2m-1/2}\le
\left(\sum_{k=1}^n\frac{1}{r_k}\right)^{2m-1/2}\le \frac{2
\sqrt{n}}{\cos^{2m}
\alpha}\frac{m}{\pi}\|\rho_n\|_{L^{2m}(\mathbb{R}^+;\frac{1}{\sqrt{x}})}^{2m}.
\end{equation}
\end{theorem}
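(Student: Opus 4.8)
The plan is to derive the theorem from the pointwise inequality~(\ref{Main_Inequality_Real_Semiaxis}) of Theorem~\ref{Th_Nikolski_Real_Semiaxis_poinwise}, applied at the single point $x=0$, squeezed between two elementary estimates forced by the beam-angle hypothesis $\varphi_k\in(\pi-\alpha,\pi+\alpha)$. The first (leftmost) inequality is immediate: for $x\ge 0$ and a pole $z_k=r_ke^{i\varphi_k}$ one has $\cos\varphi_k<0$, so $x-r_k\cos\varphi_k\ge -r_k\cos\varphi_k=|r_k\cos\varphi_k|$ and hence $|x-z_k|^2=(x-r_k\cos\varphi_k)^2+(r_k\sin\varphi_k)^2\ge r_k^2$. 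Summing the triangle inequality $|\rho_n(x)|\le\sum_{k}|x-z_k|^{-1}$ then gives $|\rho_n(x)|\le\sum_k 1/r_k$ for every $x\in\mathbb R^+$, i.e. $\|\rho_n\|_{L^\infty(\mathbb R^+)}\le\sum_k 1/r_k$.

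For the main (rightmost) inequality I would evaluate everything at $x=0$. On the one hand $\rho_n(0)=-\sum_k z_k^{-1}=-\sum_k r_k^{-1}e^{-i\varphi_k}$, and since $-\cos\varphi_k\ge\cos\alpha>0$ on the arc $(\pi-\alpha,\pi+\alpha)$, the real part satisfies $\Re\rho_n(0)=-\sum_k r_k^{-1}\cos\varphi_k\ge\cos\alpha\sum_k 1/r_k$, so $|\rho_n(0)|\ge\cos\alpha\sum_k 1/r_k$. On the other hand I apply Theorem~\ref{Th_Real_semiaxis} to the SPF $\rho_n$ (all multiplicities $n_k=1$), whose poles in $\mathbb C^+$ are $\sqrt{r_k}e^{i\varphi_k/2}$ and $-\sqrt{r_k}e^{-i\varphi_k/2}$; evaluating the associated weight $\mu_*$ at $x=0$ gives $|0-\sqrt{r_k}e^{i\varphi_k/2}|^2=|0+\sqrt{r_k}e^{-i\varphi_k/2}|^2=r_k$, hence $\mu_*(0)=\sum_k 2r_k^{-1}\sqrt{r_k}\sin(\varphi_k/2)=2\sum_k r_k^{-1/2}\sin(\varphi_k/2)\le 2\sum_k r_k^{-1/2}$.

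Now plugging $x=0$ into~(\ref{Main_Inequality_Real_Semiaxis}) yields $|\rho_n(0)|^{2m}\le\mu_*(0)\,\frac{m}{\pi}\,\|\rho_n\|_{L^{2m}(\mathbb R^+;\frac{1}{\sqrt x})}^{2m}$. Combining this with the two estimates of the previous paragraph gives $\cos^{2m}\alpha\,\bigl(\sum_k 1/r_k\bigr)^{2m}\le 2\bigl(\sum_k r_k^{-1/2}\bigr)\,\frac{m}{\pi}\,\|\rho_n\|_{L^{2m}(\mathbb R^+;\frac{1}{\sqrt x})}^{2m}$. Applying the Cauchy--Schwarz inequality $\sum_k r_k^{-1/2}\le\sqrt n\,\bigl(\sum_k 1/r_k\bigr)^{1/2}$ and dividing by $\bigl(\sum_k 1/r_k\bigr)^{1/2}$ produces exactly $\bigl(\sum_k 1/r_k\bigr)^{2m-1/2}\le\frac{2\sqrt n}{\cos^{2m}\alpha}\,\frac{m}{\pi}\,\|\rho_n\|_{L^{2m}(\mathbb R^+;\frac{1}{\sqrt x})}^{2m}$, which is the asserted inequality.

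There is no deep obstacle in this argument; the whole point is the observation that $x=0$ is simultaneously the point where $|\rho_n|$ is comparable, with the constant $\cos\alpha$ coming from the beam angle, to $\sum_k 1/r_k$, and where the weight $\mu_*$ collapses to a quantity bounded by $2\sum_k r_k^{-1/2}$; once this is seen, Cauchy--Schwarz bridges the gap between the $2m$-th powers of $\sum_k 1/r_k$ and $\sum_k r_k^{-1/2}$. The only technical point requiring care is that~(\ref{Main_Inequality_Real_Semiaxis}) presupposes the weighted $L^{2m}$-integral to be finite, which holds since $|\rho_n(x)|$ is bounded near $x=0$ and decays like $x^{-1}$ at infinity, so $|\rho_n(x)|^{2m}x^{-1/2}$ is integrable on $\mathbb R^+$.
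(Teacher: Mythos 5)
Your proof is correct and follows essentially the same route as the paper's: evaluate the pointwise inequality~(\ref{Main_Inequality_Real_Semiaxis}) at $x=0$, use $|\rho_n(0)|\ge\cos\alpha\sum_k r_k^{-1}$ and $\mu_*(0)\le 2\sum_k r_k^{-1/2}$, and bound $\sum_k r_k^{-1/2}$ by $\sqrt{n}\bigl(\sum_k r_k^{-1}\bigr)^{1/2}$ (the paper phrases this last step as a minimization over $r_k$, which is the same Cauchy--Schwarz estimate), together with $\|\rho_n\|_{L^\infty(\mathbb{R}^+)}\le\sum_k r_k^{-1}$ for the left inequality.
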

\begin{proof} For $x\in \mathbb R^{+}$ we have
$$
S:=\sum_{k=1}^n\frac{1}{r_k}\ge
|\rho_n(x)|\ge\sum_{k=1}^n\frac{|\cos \arg (x-z_k)|}
{|x-z_k|}\ge\sum_{k=1}^n\frac{\cos \alpha} {|x-z_k|}.
$$
This gives $|\rho_n(0)|\ge S\cos \alpha$ for $x=0$. Consequently, taking into account the inequalities $\mu_*(0)\le 2\,\sum_k r_k^{-1/2}$ and (\ref{Main_Inequality_Real_Semiaxis}), we obtain
$$
\frac{\cos^{2m} \alpha}{2}\, S^{2m}\left(\sum_{k=1}^n\frac{1}
{\sqrt{r_k}}\right)^{-1}\le \frac{|\rho_n(0)|^{2m}}{\mu_*(0)} \le
\frac{m}{\pi}\|\rho_n\|_{L^{2m}(\mathbb{R}^+;\frac{1}{\sqrt{x}})}^{2m}.
$$
Note that for each fixed $S$ the minimum in the left hand side is reached when $r_k=n/S$:
$$
S^{2m}\left(\sum_{k=1}^n\frac{1} {\sqrt{r_k}}\right)^{-1}\ge
\frac{1}{\sqrt{n}}S^{2m-1/2}\ge \frac{1}{\sqrt{n}}
\|\rho_n\|_{L^\infty(\mathbb{R}^+)}^{2m-1/2},
$$
which leads to (\ref{1000+++}) if we take into account the previous inequality.
\end{proof}

\section{Quadrature formulas and inequalities in the case of the segment $[-1,1]$}
\subsection{Inequalities of Jackson-Nikolskii type for rational functions}
The following result follows from the quadrature formula (\ref{Segment_L_m})
via calculations analogous to those in Section~\ref{L-p-q+++}.
\begin{theorem}
\label{Segment_Th_Real_axis++} Let $R(z)$ be a rational function of degree~$n$ whose poles do not belong to the segment~$I$. Then
\begin{equation}
\label{Segment_Main_Inequality}
 \frac{|R(x)|^{2m}}{m\mu_0(\zeta)+1}\le
\frac{1}{\pi}\|R\|_{L^{2m}(I;\,\omega)}^{2m},\qquad
x=\tfrac{1}{2}\left(\zeta+1/\zeta\right)\in I\qquad \forall
\zeta\in \gamma_1,\qquad m\in
\mathbb{N};
\end{equation}
\begin{equation}
\label{Main_Inequality_Segment+++} \|R\|_{L^q(I;\,\omega)}\le
\left(\frac{m_p\,\|\mu_0\|_{L^\infty(\gamma_1)}+1}{\pi}
\right)^{\frac{1}{p}-\frac{1}{q}} \|R\|_{L^{p}(I;\,\omega)},\qquad m_p\in
\mathbb{N}\cap [\tfrac{p}{2};1+\tfrac{p}{2}),
\end{equation}
where $0<p< q\le\infty$. Moreover, if all the poles of $R$ lie in the exterior of the ellipse
$$
\frac{1}{2}\left({\delta}+\frac{1}{{\delta}}\right)\cos t+
\frac{i}{2}\left({\delta}-\frac{1}{{\delta}}\right)\sin t,\qquad
\delta>1,\qquad t\in{\mathbb R},
$$
then for $0<p< q\le\infty$ we have
\begin{equation}
\label{ELLIPS+++} \|R\|_{L^q(I;\,\omega)}\le
\left(\frac{1}{\pi}\right)^{\frac{1}{p}-\frac{1}{q}}
\left(2m_p\,n\cdot\frac{\delta+1}{\delta-1}+1\right)^{\frac{1}{p}
-\frac{1}{q}} \|R\|_{L^{p}(I;\,\omega)}.
\end{equation}
\end{theorem}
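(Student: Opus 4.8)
The argument follows the scheme already used for the circle in Theorems~\ref{Th_Nikolski_Circle_poinwise} and~\ref{theorem3.2}, now applied to the quadrature identity~(\ref{Segment_L_m}). First I would establish the pointwise bound~(\ref{Segment_Main_Inequality}). Since $\mu_0\ge 0$, every summand on the right of~(\ref{Segment_L_m}) is nonnegative, so keeping only the term with $k=1$ gives
$$\frac{|R(x_1(m,\varphi))|^{2m}}{m\mu_0(\zeta_1(m,\varphi))+1}\le \frac{1}{\pi}\|R\|_{L^{2m}(I;\,\omega)}^{2m},\qquad x_1=\tfrac12\bigl(\zeta_1+1/\zeta_1\bigr).$$
Exactly as in the proof of Theorem~\ref{Th_Nikolski_Circle_poinwise}, the notch $\zeta_1(m,\varphi)$, being a root of $\zeta B_0^m(\zeta)=e^{i\varphi}$, runs continuously around $\gamma_1$ as $\varphi$ increases, so for any prescribed $\zeta\in\gamma_1$ one may choose $\varphi$ with $\zeta_1(m,\varphi)=\zeta$; this yields~(\ref{Segment_Main_Inequality}).

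Next, replacing $\mu_0(\zeta)$ by $\|\mu_0\|_{L^\infty(\gamma_1)}$ in~(\ref{Segment_Main_Inequality}) and taking the supremum over $x\in I$ (legitimate because $x=\tfrac12(\zeta+1/\zeta)$ sweeps all of $I$ as $\zeta$ runs over $\gamma_1$) gives $\|R\|_{L^\infty(I;\,\omega)}^{2m}\le C_m\|R\|_{L^{2m}(I;\,\omega)}^{2m}$ with $C_m=(m\|\mu_0\|_{L^\infty(\gamma_1)}+1)/\pi$. Then I would invoke the interpolation device of Section~\ref{L-p-q+++}: for $2(m-1)<p\le 2m$ (so $m=m_p$),
$$\|R\|_{L^{2m}(I;\,\omega)}^{2m}=\int_I|R|^{2m-p}|R|^p\,\omega\le \|R\|_{L^\infty(I;\,\omega)}^{2m-p}\|R\|_{L^p(I;\,\omega)}^{p},$$
whence $\|R\|_{L^\infty(I;\,\omega)}\le C_m^{1/p}\|R\|_{L^p(I;\,\omega)}$, and then for $p<q\le\infty$,
$$\|R\|_{L^q(I;\,\omega)}^{q}=\int_I|R|^{q-p}|R|^p\,\omega\le \|R\|_{L^\infty(I;\,\omega)}^{q-p}\|R\|_{L^p(I;\,\omega)}^{p}\le C_m^{(q-p)/p}\|R\|_{L^p(I;\,\omega)}^{q},$$
which is~(\ref{Main_Inequality_Segment+++}) after taking $q$-th roots (for $q=\infty$ it is the preceding display itself).

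Finally I would prove~(\ref{ELLIPS+++}) by estimating $\|\mu_0\|_{L^\infty(\gamma_1)}$. The Zhukovsky map $z\mapsto\tfrac12(z+1/z)$ carries the circle $|z|=1/\delta$ onto the ellipse displayed in the theorem, and the punctured disc $0<|z|<1/\delta$ (with $0\mapsto\infty$) onto its exterior; hence the hypothesis that all poles of $R$ lie outside this ellipse forces every pole $z_k$ of $\mathcal R$ with $|z_k|<1$ to satisfy $|z_k|<1/\delta$. Then, for $\zeta\in\gamma_1$, using $|\zeta-z_k|\ge 1-|z_k|$ and the monotonicity of $t\mapsto\frac{1+t}{1-t}$ on $[0,1)$,
$$\mu_0(\zeta)=\sum_{k=1}^{\nu}\frac{n_k(1-|z_k|^2)}{|\zeta-z_k|^2}\le\sum_{k=1}^{\nu}n_k\,\frac{1+|z_k|}{1-|z_k|}\le\frac{\delta+1}{\delta-1}\sum_{k=1}^{\nu}n_k=2n\,\frac{\delta+1}{\delta-1},$$
since $\sum n_k=2n$; substituting into~(\ref{Main_Inequality_Segment+++}) yields~(\ref{ELLIPS+++}).

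The only genuinely delicate point is this last step: one must track how the poles of $R$ turn into the poles of $\mathcal R=R_1\cdot\overline{R_1(1/\overline z)}$ under the composition with Zhukovsky's map and the symmetrization with respect to $\gamma_1$, and verify that ``exterior of the ellipse'' really corresponds to ``modulus $<1/\delta$'' rather than some other radius. Everything else is a direct transcription of the circle argument of Theorems~\ref{Th_Nikolski_Circle_poinwise} and~\ref{theorem3.2}, as the quoted reduction via the Zhukovsky substitution already makes explicit.
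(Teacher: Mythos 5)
Your argument is correct and is exactly the route the paper intends: the paper proves nothing beyond pointing to the quadrature identity (\ref{Segment_L_m}) and the circle-case calculations of Theorems~\ref{Th_Nikolski_Circle_poinwise} and~\ref{theorem3.2}, which is precisely what you reproduce (one-term truncation plus movable notch, then the standard $(q,p)$ interpolation trick). Your verification of the ellipse step — that the Zhukovsky map sends $\{0<|z|<1/\delta\}$ onto the exterior of the stated ellipse, so the poles $z_k$ of ${\mathcal R}$ in the unit disc satisfy $|z_k|<1/\delta$ and hence $\mu_0\le 2n\,\tfrac{\delta+1}{\delta-1}$ with $\sum n_k=2n$ — correctly supplies the detail the paper leaves implicit.
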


\subsection{Inequalities of Jackson-Nikolskii type for polynomials}
\label{Segment_SPF_Section}

Now we use the parametric quadrature formulas to estimate the norms of complex polynomials on the segment~$I$. If we put $R=P_n$ in Theorem~\ref{Segment_Th_Real_axis}, where $P_n$ is a polynomial
of degree $n$ with complex coefficients, then the function
$R_1(z)=P_n(\tfrac{1}{2}(z+1/z))$ of degree $2n$ has poles only at $z=0$ and $z=\infty$, each one of multiplicity~$n$.
Consequently, the function
 ${\mathcal R}$ has only one pole~$z=0$ of multiplicity~$2n$ in the unit disc. By this reason it follows from the inequality (\ref{Main_Inequality_Segment+++}) and the identity $|\mu_0(\zeta)|_{\gamma_1}\equiv 2 n$ that
\begin{equation}
\label{Segment_Polynomials} \|P_n\|_{L^q(I;\,\omega)}\le
\left(\frac{2m_p\,n+1}{\pi} \right)^{\frac{1}{p}-\frac{1}{q}}
\|P_n\|_{L^{p}(I;\,\omega)},\qquad p< q\le\infty.
\end{equation}
This estimate is sharp; for $(q,p)=(\infty,2)$ it becomes an equality for the polynomials
\begin{equation}
\label{EXTREMA_POLY}
P^*_n(x)=P^*_n(\tfrac{1}{2}(z+1/z))=
C\frac{1}{z^n}\frac{z^{2n+1}-1}{z-1}.
\end{equation}
These polynomials exist and are actually  the Jacobi polynomials
$P^*_n(x)=P^{(1/2,-1/2)}_n(x)$ of degree $n$, which satisfy the equation
$$
(1-x^2)y''(x)-(1+2x)y'(x)+n(n+1)y(x)=0.
$$
Indeed, it can be easily checked that the substitution $Y(z)/(z-1)=y(x)$ gives the simple equation
$$
Y''(z)z^2-n(n+1)Y(z)=0
$$
with the general solution $Y(z)=c_1z^{n+1}+c_2z^{-n}$. For the polynomial solution $P^*_n(x)=Y(z)/(z-1)$ the identity $c_1=-c_2$ is necessary so that (\ref{EXTREMA_POLY}) holds. It is now sufficient to put $m=1$ and $\varphi=0$ in (\ref{Segment_L_m}) and take $x_k=x_k(1,0)$.
The notches $\zeta_k(1,0)$ are the roots of the equation $z^{2n+1}=1$ in this case. Therefore all the summands but one in
(\ref{Segment_L_m}) vanish and hence the inequality~(\ref{Segment_Polynomials}) for $(q,p)=(\infty,2)$ ($m_2=1$)
becomes an equality. Furthermore,
$$
|P^*_n(1)|=2n+1= \sqrt{\frac{2n+1}{\pi}}
\|P^*_n\|_{L^{2}(I;\,\omega)}.
$$
Inequalities of the form (\ref{Segment_Polynomials}) with various constants are known, see, for example, \cite{Arestov-D,Arestov-Deykalova,Lupas,
{DanSem2016}}. The paper \cite{Arestov-Deykalova} contains, among other results, the inequality for
$(q,p)=(\infty,2)$ with the same sharp constant as in~(\ref{Segment_Polynomials}).

\section*{Acknowledgments}

P. Chunaev is supported by the Russian Foundation for Basic Research (project 16-31-00252 mol\_a) and by the European Research Council (ERC grant 320501, FP7/2007-2013). V. Danchenko is supported by the Russian Ministry of Education and Science  (task 1.574.2016/1.4).
The authors are also partially supported by the Russian Foundation for Basic Research (project 18-01-00744 A).

The authors are grateful to A. Baranov, R. Zarouf and S. Tikhonov for valuable discussions.

\end{document}